\newcommand{\el}{I_{\nu}(\ell)}
\renewcommand{\iu}{i}
\numberwithin{equation}{section} 
\begin{document}

\title[A proof of The Radial limit conjecture for CGP invariants]{A proof of The Radial Limit Conjecture for Costantino--Geer--Patureau-Mirand Quantum invariants}

\author[W. Misteg\aa{}rd]{William  Elb\ae{}k  Misteg\aa{}rd}
\address{Center for Quantum Mathematics, University	of Southern Denmark, 5230 Odense, Denmark}
\email{wem@imada.sdu.dk}

\author[Y. Murakami]{Yuya Murakami}
\address{Faculty of Mathematics, Kyushu University, 744, Motooka, Nishi-ku, Fukuoka, 819-0395, Japan}
\email{murakami.yuya.896@m.kyushu-u.ac.jp}

\date{}

\maketitle


\begin{abstract}
For a negative definite plumbed three-manifold, we give an integral representation of the appropriate average of the GPPV invariants of Gukov--Pei--Putrov--Vafa, which implies that this average admits a resurgent asymptotic expansion, the leading term of which is the Costantino--Geer--Patureau-Mirand invariant of the three-manifold. This proves a conjecture of Costantino--Gukov--Putrov.  
\end{abstract}



\section{Introduction} \label{sec:main}


Let $(M,\omega)$ be a pair consisting of an oriented negative definite plumbed $3$-manifold $M$ with a plumbing graph $\Gamma$ (assumed to be a tree with negative definite adjacency matrix) and a cohomology class $\omega \in H^1(M, \Q/2\Z) \smallsetminus H^1(M, \Z/2\Z)$. Let $r\geq 2$ be a positive integer which is not divisible by $4$. Let $\mathcal{C}$ be the non-semisimple ribbon tensor category associated with the so-called ``unrolled'' quantum group $U_{\xi}^H(\mathfrak{sl}(2,\bbC)), \xi=e^{\pi i/r},$ which was constructed in \cite{CGP14}.  Denote  the topological invariant of $(M,\omega)$ defined in \cite{CGP14} using the non-semisimple ribbon tensor category $\mathcal{C}$ by $ Z_r(M,\omega) \in \bbC.$

Let $\Delta \in \mathbb{Q}$ be a certain rational defined below. Set $\mathbb{H}\coloneqq\{z \in \bbC: \Im(z)>0\}$. For each $z \in \bbC$ define $\bm{e}(z)=\exp(2\pi i z)$ and for $\tau \in \mathbb{H}$ set $q=\bm{e}(\tau).$ Let $\fraks \in \mathrm{Spin}^c(M)$. The GPPV $q$-series is the convergent integral power series invariant introduced in \cite{GPPV,GM}, which we denote by
\begin{equation} \label{eq:Zhat} \widehat{Z}(M,\fraks;q) \in q^{\Delta}  \mathbb{Z}[[q]].
\end{equation} We now recall the radial limit conjecture in the non-semisimple setup (\cite[Conjecture 1.2]{Costantino-Gukov-Putrov}). Let $\Delta \in \Q$ be the constant defined in \cref{def:GPPV}. For each $\fraks \in \mathrm{Spin}^c(M)$ let $z_r(\omega,\fraks) \in \bbC$ be the constant defined in \cref{def:constans} below (and denoted by $c^{\mathrm{CGP}}_{\omega,\fraks}$ in \cite{Costantino-Gukov-Putrov}). By a sector, we mean a subset of $\bbC^{\times}$ of the form $\{z=r\bm{e}(\theta): r \in (0,\infty), \theta \in [a,b]\}$, for some $a,b \in \R$. For $\tau \in \mathbb{H}$  define
\begin{equation} 
\label{def:Z_tau}
\widehat{Z}_r(M,\omega;\tau)\coloneqq\bm{e}(-\Delta\tau)\sum_{\mathfrak{s} \in \mathrm{Spin}^c(M)} z_r(\omega,\mathfrak{s})  \widehat{Z} (M,\mathfrak{s};\bm{e}(\tau+1/r)).		
\end{equation} 
\begin{conj}[{\cite[Conjecture 1.2]{Costantino-Gukov-Putrov}}] \label{conj:Costantino-Gukov-Putrov}
For any sector $S \subset \mathbb{H}$ the following limit holds as $\tau\in S$ tends to $0$
	\begin{equation} \label{eq:analyticcontinuation}
		\lim_{\tau \rightarrow 0} \widehat{Z}_r(M,\omega;\tau)=Z_r(M,\omega).
		\end{equation} 
\end{conj}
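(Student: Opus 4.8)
The plan is to reduce \cref{conj:Costantino-Gukov-Putrov} to a statement about the asymptotics of a finite collection of Laplace-type integrals attached to the plumbing tree $\Gamma$, adapting to the non-semisimple setting the strategy that works for Witten--Reshetikhin--Turaev invariants of plumbed manifolds. First I would recall the plumbing-graph formula for the GPPV series of \cite{GPPV,GM}: for a negative definite tree $\Gamma$ with vertex set $V$, $|V|=n$, and adjacency matrix $B$,
\begin{equation*}
\widehat{Z}(M,\mathfrak{s};q)=q^{\Delta}\,\mathrm{v.p.}\oint \prod_{v\in V}\frac{dw_v}{2\pi\iu\,w_v}\,(w_v-w_v^{-1})^{2-\deg(v)}\,\widetilde{\Theta}^{-B}_{\mathfrak{s}}(w),
\end{equation*}
where $\widetilde{\Theta}^{-B}_{\mathfrak{s}}$ is the partial theta function of the lattice $\Z^{V}$ with quadratic form $-B$ supported on the coset determined by $\mathfrak{s}$. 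Substituting this into \eqref{def:Z_tau}, the weighted average over $\mathrm{Spin}^c$ structures with the weights $z_r(\omega,\mathfrak{s})$ collapses the coset sum and inserts the $\omega$-holonomy into the theta function; the role of the constants $\Delta$ and $z_r(\omega,\mathfrak{s})$ defined in \cref{def:GPPV} and \cref{def:constans} is precisely to make this collapse clean and to land on the normalization used by \cite{CGP14}. The outcome is an expression for $\widehat{Z}_r(M,\omega;\tau)$ as an $n$-fold contour integral of an elementary rational prefactor against a product of one-dimensional false/partial theta functions $\Psi(\tau+1/r)$, one per high-valency vertex.

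Second I would produce the integral representation announced in the abstract. The decisive point is that a partial theta function $\Psi(\tau)=\sum_{m\ge 0}\chi(m)\,\bm{e}(m^2\tau/2N)$, evaluated at $\tau+1/r$, admits on $\mathbb{H}$ a Laplace representation
\begin{equation*}
\Psi\!\left(\tau+\tfrac1r\right)=\int_{0}^{\rme^{\iu\phi}\infty}\Phi(\xi)\,\rme^{\,\xi/(2\pi\iu\,\tau)}\,d\xi,
\end{equation*}
obtained by Euler--Maclaurin summation, where the Borel density $\Phi$ is an elementary trigonometric (hence meromorphic, resurgent) function whose poles lie exactly at the values corresponding to the abelian flat connections on $M$, equivalently to the Chern--Simons numbers entering $Z_r(M,\omega)$. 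Inserting these representations vertex by vertex and carrying out the $n$ contour integrals exhibits $\widehat{Z}_r(M,\omega;\tau)$ as a finite sum of Laplace integrals of this type. Since each density extends meromorphically along every ray from the origin, rotating the contour and collecting residues yields, uniformly for $\tau$ in any sector $S\subset\mathbb{H}$, both the existence of a resurgent asymptotic expansion of $\widehat{Z}_r(M,\omega;\tau)$ as $\tau\to 0$ and an explicit formula for its constant term as a finite sum of residues of the integrand at the origin and at the first singularities.

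Third I would identify that constant term with $Z_r(M,\omega)$. For this I would write the CGP invariant of the plumbed manifold as a modified-trace state sum over $\Gamma$: the tree $\Gamma$ is a surgery presentation of $M$, so by the surgery formula of \cite{CGP14} one colours the edges of $\Gamma$ by generic simple objects of $\mathcal{C}$, evaluates the resulting bichrome graph with the modified trace, and sums against the $\mathcal{C}$-analogue of the Kirby colour; carrying out the Gaussian integrations over the colours reproduces precisely the Gauss sums and quantum-integer products coming out of the residue computation in step two, so the two finite expressions agree. Throughout, negative definiteness of $B$ is what guarantees convergence of the $q$-series and of the associated Gaussian integrals, while the hypothesis $\omega\notin H^1(M,\Z/2\Z)$ forces every simple object that appears to be generic, so that no nilpotent (genuinely non-semisimple) block intervenes and the modified trace is nonzero.

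The main obstacle I expect lies in the second and third steps. First, one must show that the Laplace/Borel representation is valid as an identity of holomorphic functions on $\mathbb{H}$ and that the resulting expansion is genuinely resurgent \emph{and uniform over a full sector}: because a partial theta function, unlike a full one, has no continuous non-holomorphic companion, the ``non-perturbative'' tail must be shown to be an honestly convergent sum of exponentially small Laplace integrals attached to the higher singularities of $\Phi$, not merely an asymptotic remainder, which is where the resurgence input is essential. Second, the combinatorial bookkeeping matching the residue formula to the CGP state sum is delicate: it requires tracking the generic-weight limit of the modified trace through the plumbing calculus and controlling the precise dependence on $\omega$ and on the choice of $\mathrm{Spin}^c$ structures, so that the normalizing constants $\Delta$ and $z_r(\omega,\mathfrak{s})$ land on exactly the leading coefficient rather than a scalar multiple of it.
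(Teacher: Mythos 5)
Your high-level strategy --- integral representation, steepest descent, identification of the constant term with the CGP invariant --- is the same as the paper's, but the technical core of your plan contains a step that fails for general plumbing trees. You propose to write $\widehat{Z}_r(M,\omega;\tau)$ as a contour integral against \emph{a product of one-dimensional partial theta functions, one per high-valency vertex}, and then to insert a one-dimensional Laplace/Borel representation vertex by vertex. After performing the $\mathrm{v.p.}$ contour integral in \cref{def:GPPV}, what remains is a lattice sum over (an arithmetic progression inside) $\Z^{V}$ weighted by $q^{-\ell^{T}B^{-1}\ell/4}$, and the quadratic form $B^{-1}$ does \emph{not} decouple the high-valency vertices unless there is only one of them (the $Y$-shaped case already treated in \cite[Example 4.27]{Costantino-Gukov-Putrov}). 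For a tree with two or more vertices of degree $\geq 3$ there is no factorization into one-dimensional false theta functions, so the Lawrence--Zagier-type one-dimensional Laplace representation you rely on in your second step has nothing to apply to. The paper avoids this by keeping the full $\abs{V}$-dimensional sum throughout: it applies Deloup--Turaev Gaussian reciprocity (\cref{prop:reciprocity}) to convert the coset sum with form $-B^{-1}$ into a finite sum over $\Z^{V}/r\Z^{V}$ with form $-B$, which identifies the averaged $q$-series coefficients with the Fourier coefficients of the explicit meromorphic function $G_{\omega,r}$ satisfying $G_{\omega,r}(0)=Z_r(M,\omega)$ (\cref{prop:GeneratingFunction}), and then represents the whole average as a sum of $\abs{V}$-dimensional Gaussian integrals over the $2^{\abs{V}}$ shifted contours $\Upsilon_{\nu}=i\varepsilon\nu+\R^{V}$, to which multi-dimensional stationary phase applies directly.

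A second, related omission: you describe the matching of the normalizing constants $\Delta$ and $z_r(\omega,\fraks)$ as a ``collapse'' of the coset sum, but you give no mechanism for it. In the paper this matching is exactly the content of equation \eqref{eq:reduction}, and it is proved by the Gaussian reciprocity computation together with the explicit formulas \eqref{Formulae} for the Rokhlin invariant, linking form and its quadratic refinement; it is a genuinely multi-dimensional Gauss-sum identity, not a bookkeeping step downstream of a residue computation. Finally, your claim that the constant term is obtained as ``a finite sum of residues of the integrand at the origin'' is off: the hypothesis $\tilde{\omega}_v\notin\Z/2\Z$ guarantees that the origin is \emph{not} on the pole divisor $\mathcal{P}_{\omega,r}$, and the constant term is the regular value $G_{\omega,r}(0)$ produced by stationary phase at the unique critical point of $Q$, not a residue.
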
 Equation \eqref{eq:analyticcontinuation} can be seen as given an analytic continuation of $Z_r(M,\omega)$ as a function of $\bm{e}(1/r)$ to the interior of the unit disc (see \cref{rem:analyticcontinuation}).  In \cite{Costantino-Gukov-Putrov} the conjecture is discussed in greater generality. In the works \cite{GPPV,GM} a similar conjecture was posed for Witten--Reshetikhin--Turaev quantum  invariants \cite{Witten,RT90, Reshetikhin-Turaev}, and this conjecture has been thoroughly studied \cite{Andersen-Mistegard,FIMT,MM,M_plumbed,M_indefinite,M_GPPV,M-Tarashima,wheeler2024quantum}, also in connection to resurgence \cite{Ecalle81,Ecalle81b,CS16} and quantum modularity \cite{AHLMSS,Zagier_quantum,han2022}. The connection to resurgence and quantum modularity was first presented in the influential work \cite{GMP2016resurgence}, which in turn used results from \cite{LZ,LR}. The theory of resurgence, exponential integrals and Pham--Picard--Lefshetz theory \cite{BerryHowls,DH02,Pham83} has played a central role in quantum topology, in particular in connection to complexification and quantum field theory \cite{DU12,BDSSU17,KY22,GW09} and quantum complex Chern--Simons theory \cite{Andersen-Petersen,AM24,GGM21,GGM23,G08,Garoufalidis-Zagier, Witten11b}. We prove
\begin{thm} \label{thm:main}
	\cref{conj:Costantino-Gukov-Putrov} is true for $(M,\omega)$.
\end{thm}
Our proof of \cref{thm:main} is based on an integral representation of \eqref{def:Z_tau} stated in \cref{thm:integral} below, which is proved using  Gaussian reciprocity  (inspired by \cite{M_GPPV,Costantino-Gukov-Putrov}). This integral representation is inspired by results appearing in \cite{Andersen-Mistegard,GMP2016resurgence}, and via the method of steepest descent \cite{Debye,Fedoryuk} it entails a full resurgent expansion of $\widehat{Z}_r(M,\omega;\tau)$ as $\tau$ tends to $0$, denoted by $\widehat{Z}_r^{\text{Pert}}(M,\omega;\tau)$ (and defined in \eqref{def:BorelandPertub}),  with constant term equal to $Z_r(M,\omega)$.  The integral representation \eqref{eq:integraprep} opens up for future demonstration of quantum modularity.  

\subsection{Integral Representation} \label{sec:integral} Let $V$ denote the set of vertices of $\Gamma$. Let $L \subset S^3$ be the associated surgery link of $M$ (see \cref{sec:CGPM}). For each $v\in V$, let $m_v \in H_1(M)$ be the cycle represented by a meridian of the component of $L$ indexed by $v$. Define $\tilde{\omega}\in (\R/2\Z)^V$ by $\tilde{\omega}_v=\omega(m_v)$ for each $v \in V$.  We can and will assume that for all $v \in V$ we have $\tilde{\omega}_v \notin \Z / 2\Z$, as shown in \cite{CGP14}. 
 Let $B$ be the adjacency matrix of $\Gamma$.  Let $Q$ denote the quadratic form on $\bbC^{V}$ associated with $-B$. Write $B^{-1}=(b^{v,w})_{v,w \in V}$ for the inverse matrix.  For each $v \in V$, define $F_{v} \in \mathcal{M}(\bbC)$ by $F_{v}(x)=(x-1/x)^{2-\deg(v)}$. Let $e=(1,...,1)\in \Z^V$. 
 Let $\lambda\in \bbC^{\times}$ be the constant defined in equation \eqref{def:lambda} below, and define  $G_{\omega,r} \in \mathcal{M}(\bbC^V)$ and  $(\widehat{Z}_{\omega,r,l})_{l=0}^{\infty} \subset \bbC$ by the formulae
\begin{align} 
\label{def:G} G_{\omega,r}(x) \coloneqq&\lambda\sum_{\alpha \in \frac{1}{2}(\tilde{\omega} + r e) + \Z^V / r \Z^V}
\bm{e}\left( \frac{- Q(\alpha)}{r} \right)\prod_{v \in V} F_{v}\left(\bm{e}\left(\frac{\alpha_v}{r} +\frac{x_v}{2\pi i}\right)\right),
\\\label{def:Zhatpertubativecoef}  \widehat{Z}_{\omega,r,l}\coloneqq & \frac{1}{4^l l!}\left(\sum_{v,w \in V} b^{v,w} \frac{\partial}{\partial x_v} \frac{\partial}{\partial x_w}\right)^l(G_{\omega,r})(0), \quad \forall l \in \Z_{\geq 0}.
 \end{align}
The quantity $ \widehat{Z}_{\omega,r,l} $ is well-defined since the function $G_{\omega,r}$ is holomorphic at the origin. 
In fact, it is well-known that $G_{\omega,r}(0)= Z_r(M,\omega)$ (see \cite[Section 3.1]{Costantino-Gukov-Putrov} and \cref{prop:G(1)=Z} below) and consequently $\widehat{Z}_{\omega,r,0}=Z_r(M,\omega)$. 
The poledivisor of $x \mapsto G_{\omega,r}(ix)$ is given by 
\begin{equation} \label{def:P}
	\mathcal{P}_{\omega,r}
	= \bigcup_{\substack{
			v \in V:  \deg(v) \geq 3,\\
			\alpha \in \frac{1}{2}(\tilde{\omega} + r e) + \Z^m / r \Z^m 
		}}
	\left\{x \in \bbC^V: \frac{2\pi i\alpha_{v}}{r}+ix_{v} \in \pi i \Z\right\}.
\end{equation}
Let $\varepsilon>0$ be a small positive parameter. For each $\nu \in \mu_2^V=\{-1,1\}^V$,  define the integration contour $\Upsilon_{\nu} \coloneqq i\varepsilon \nu+\R^V$. Note that $\Upsilon_{\nu}\cap \mathcal{P}_{\omega,r}=\emptyset$. Observe that the only stationary point of $Q \colon \bbC^V \rightarrow \bbC$ is the origin. We refer to \cref{sec:Asymptotics} for a presentation of basic concepts from asymptotic analysis.  



\begin{thm} \label{thm:integral} The integral representation \eqref{eq:integraprep} holds, and for any sector $S\subset \mathbb{H}$ an application of the method of steepest descent \cite{Fedoryuk} 
	to the right hand side of \eqref{eq:integraprep} gives the asymptotic expansion \eqref{eq:expansion} as $\tau \in S$ tends to $0$
	\begin{align}  \label{eq:integraprep}	\widehat{Z}_{r}(M,\omega;\tau) & =  \sum_{\nu \in \mu_2^V} \left(\frac{\det(B)}{(8\pi^2 i\tau)^{\abs{V}}}\right)^{\frac{1}{2}}\int_{\Upsilon_{\nu}} \exp\left( \frac{Q(x)}{2\pi i \tau}\right) G_{\omega,r}(ix)dx,	 
	\\  \label{eq:expansion}\widehat{Z}_{r}(M,\omega;\tau) &\sim Z_r(M,\omega)+\sum_{l=1}^{\infty} \widehat{Z}_{\omega,r,l} (2\pi i\tau)^{l}.\end{align}
\end{thm}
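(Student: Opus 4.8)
The plan is to derive both assertions of \cref{thm:integral} from the explicit shape of the GPPV series: first to establish the integral representation \eqref{eq:integraprep} via Gaussian reciprocity, and then to read off the asymptotic expansion \eqref{eq:expansion} from it by the saddle-point method. I would start from the definition of $\widehat{Z}(M,\fraks;q)$ recalled in \cref{def:GPPV} (after \cite{GPPV,GM}) and put it in its standard contour-integral presentation: a principal-value integral over the torus $\{\abs{z_v}=1\}_{v\in V}$ of $\prod_{v\in V}F_v(z_v)$ times the lattice theta series attached to $-B$ and to the residue class of $\fraks$ (up to an overall $q$-power). Substituting $q=\bm{e}(\tau+1/r)$ factors every $q$-power occurring in the theta series into a $\tau$-dependent Gaussian times a root-of-unity factor depending only on $1/r$, and then forming the weighted combination $\sum_{\fraks\in\mathrm{Spin}^c(M)}z_r(\omega,\fraks)(\cdots)$ knits the residue data indexed by $\mathrm{Spin}^c(M)$ together with the twist $\tilde{\omega}$; this is where the constants $z_r(\omega,\fraks)$ from \cref{def:constans} enter.

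\emph{Gaussian reciprocity.} Next I would decompose the summation lattice into residue classes modulo $r$ and apply quadratic Gauss sum reciprocity together with a Poisson--Euler--Maclaurin resummation of the remaining sublattice sum, in the spirit of \cite{M_GPPV,Costantino-Gukov-Putrov} (the signature $-\abs{V}$ of the negative definite form $B$ entering through the Gauss sum phase). The $r$-periodic part then collapses to the finite sum over $\alpha\in\tfrac{1}{2}(\tilde{\omega}+re)+\Z^V/r\Z^V$ appearing in \eqref{def:G} --- that is, to $G_{\omega,r}$, once the constant $\lambda$ is absorbed --- while the leftover $\tau$-Gaussian becomes a Gaussian integral over $\R^V$ carrying the prefactor $\bigl(\det(B)/(8\pi^2 i\tau)^{\abs{V}}\bigr)^{1/2}$. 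The principal-value prescription on the torus, rewritten in terms of $x$ through the argument $\bm{e}(\alpha_v/r+x_v/2\pi i)$ of $F_v$, unpacks into the average over the $2^{\abs{V}}$ ways of resolving the poles of $\prod_v F_v$ along $z_v=\pm 1$; this average is $2^{-\abs{V}}\sum_{\nu\in\mu_2^V}\int_{\Upsilon_\nu}$, the shift $i\varepsilon\nu$ keeping $\Upsilon_\nu$ off the pole divisor $\mathcal{P}_{\omega,r}$ of \eqref{def:P}, and the factor $2^{-\abs{V}}$ is precisely what turns the bare Gaussian normalisation into the prefactor displayed in \eqref{eq:integraprep}. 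Since $\Im\tau>0$, the $\tau$-regularised series converges absolutely, so all of these rearrangements --- and the interchange of the lattice sum with the contour integral --- are legitimate; this establishes \eqref{eq:integraprep}.

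\emph{Steepest descent.} For \eqref{eq:expansion} I would verify the hypotheses of the multidimensional saddle-point theorem of \cite{Fedoryuk} (as already used in \cite{Andersen-Mistegard,GMP2016resurgence}). The form $Q$ is positive definite because $-B$ is, its only stationary point is the origin, and for $\tau$ in a sector $S\subset\mathbb{H}$ one has $\Re\bigl(Q(x)/(2\pi i\tau)\bigr)=-Q(x)\,\Im\tau/(2\pi\abs{\tau}^2)<0$ for $x\in\R^V\smallsetminus\{0\}$, so the integrand along each $\Upsilon_\nu$ decays and concentrates near the origin, where $G_{\omega,r}(ix)$ is holomorphic and may be Taylor expanded. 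Deforming $\Upsilon_\nu$ onto a steepest-descent contour through the origin crosses the real hyperplanes comprising $\mathcal{P}_{\omega,r}$, but each residue so produced is an integral of the same shape supported away from $0$, hence with strictly negative real part of the phase at its own saddle, and therefore contributes only exponentially small --- that is, resurgent --- terms which do not affect the asymptotic power series. The resulting saddle-point expansion of the Gaussian integral against the Taylor series of $G_{\omega,r}(ix)$ is the standard stationary-phase (Wick) expansion, with covariance the multiple of $B^{-1}$ dictated by the Hessian of $Q$ at the origin: its odd-order terms vanish by symmetry, its $l$-th even term is $\widehat{Z}_{\omega,r,l}(2\pi i\tau)^l$ with $\widehat{Z}_{\omega,r,l}$ as in \eqref{def:Zhatpertubativecoef}, and summing the $2^{\abs{V}}$ contributions --- the prefactor being calibrated so that $\sum_{\nu}$ reconstitutes one full Gaussian --- its constant term is $G_{\omega,r}(0)=Z_r(M,\omega)$ by \cref{prop:G(1)=Z}. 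This yields \eqref{eq:expansion}.

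The substantive work is the reciprocity step: keeping careful track of the Gauss sum constants, signs, and powers of $2$ --- in particular the exact cancellation between the $2^{-\abs{V}}$ from the principal-value average and the factor $8\pi^2$ (rather than $2\pi^2$) in the prefactor --- and rigorously justifying the rearrangement of the conditionally convergent theta and false-theta sums and the passage from the sublattice sum to the $\R^V$-integral. The steepest-descent step is then essentially an application of \cite{Fedoryuk}, the one point needing care being the verification that the deformation across $\mathcal{P}_{\omega,r}$ contributes only exponentially suppressed terms.
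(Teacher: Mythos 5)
Your strategy for the integral representation \eqref{eq:integraprep} is essentially the paper's: Gaussian reciprocity collapses the $\mathrm{Spin}^c$-weighted theta sum to the finite sum defining $G_{\omega,r}$ (this is the content of \cref{prop:GeneratingFunction}, proved via \cref{prop:reciprocity}), the exact Gaussian integral formula \eqref{eq:Gaussian} converts each surviving $\tau$-Gaussian into an integral over $\R^V$, and the principal-value average over the $2^{\abs{V}}$ resolutions of the poles produces $2^{-\abs{V}}\sum_{\nu}\int_{\Upsilon_{\nu}}$ with the prefactor bookkeeping you describe (your $8\pi^2$ versus $2\pi^2$ accounting is correct). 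One detail you gloss over: the Laurent series $\sum_{\ell}G^{\nu}_{\omega,r,\ell}e^{i\transpose{(\el)}x}$ does \emph{not} converge for $x\in\R^V$, only on $\Upsilon_{\nu}$ where $\varepsilon(x,\nu)<1$; so the interchange of sum and integral must be performed \emph{after} shifting each term's contour from $\R^V$ to $\Upsilon_{\nu}$ by Stokes (as the paper does), and is not justified merely by absolute convergence of the $q$-series in $\tau$.

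The genuine gap is in your steepest-descent step. The pole divisor $\mathcal{P}_{\omega,r}$ of $x\mapsto G_{\omega,r}(ix)$ is contained in $\R^V$, which for $\tau\in i\R_{+}$ \emph{is} the steepest-descent contour through the origin; so ``deforming $\Upsilon_{\nu}$ onto a steepest-descent contour through the origin'' lands the contour on the poles, the resulting integral diverges, and the ``residues produced by crossing the real hyperplanes'' are not well defined as stated (in several variables, crossing a polar hyperplane of order $\deg(v)-2\geq 1$ does not yield a clean lower-dimensional residue integral without a careful local analysis, and you cannot terminate the deformation on the singular locus). The paper avoids this entirely: it performs an \emph{allowable} deformation that never meets $\mathcal{P}_{\omega,r}$, lowering $\Upsilon_{\nu}$ to $\R^V$ only inside a small box $[-\varepsilon',\varepsilon']^{\abs{V}}$ around the origin (where $G_{\omega,r}$ is regular, since $\tilde{\omega}_v\notin\Z$ keeps the poles a definite distance away) and leaving the contour at height $i\varepsilon\nu$ elsewhere; the off-origin part is then bounded directly by $\abs{\exp(C/2\pi i\tau)}$ with $C=\inf\Re Q>0$ there, and \cref{lem:StationaryPhase} applied to the compactly supported piece at the origin gives \eqref{eq:expansion} with coefficients \eqref{def:Zhatpertubativecoef}. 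To repair your argument you would either have to adopt this localized deformation (in which case no residues ever arise) or set up a rigorous multidimensional residue calculus across $\mathcal{P}_{\omega,r}$ and prove the exponential suppression of each resulting boundary term, which is substantially more work than the sentence you devote to it.
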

For small $\delta>0$, let $D_{\delta}(0)\subset \bbC^V\cong \R^V+i\R^V$ be the $2\abs{V}$-dimensional ball centered at the origin and of radius $\delta$. The application of the method of steepest descent proceeds by deforming for each $\nu \in \mu_2^V$ the part of $\Upsilon_{\nu}$ close to the origin to obtain a new contour $\Upsilon'_{\nu}$, such that $\Upsilon'_{\nu}\cap D_{\delta}(0)=\R^V \cap D_{\delta}(0) $, and then applying stationary phase approximation ({\cite[Chapter 7, Lemma 7.7.3]{Hormander83}}) to the multi-dimensional contour integral $\int_{\Upsilon'_{\nu}} \exp\left( \frac{Q(x)}{2\pi i \tau}\right) G_{\omega,r}(ix)dx$.

 It follows from the expansion \eqref{eq:expansion}, that the sequence $(\widehat{Z}_{\omega,r,l})_{l=0}^{\infty}$ is a topological invariant of $(M,\omega)$.
Therefore, we are justified in introducing the following notation
\begin{equation} \label{def:BorelandPertub}\widehat{Z}_{r}^{\mathrm{Pert}}(M,\omega;\tau)\coloneqq\sum_{l=0}^{\infty} \widehat{Z}_{\omega,r,l} (2\pi i\tau)^{l}.
\end{equation}  Using standard theory \cite[Chapter 7]{Arnold88} we can compute the Borel transform (see \cref{def:Borel})) of \eqref{def:BorelandPertub}. Denote by $\Omega\in \Gamma^{\infty}(\wedge^{\abs{V}-1}T^* \R^{V} \setminus \{0\})$ the unique smooth differential form on $\R^V \setminus\{0\}$ of rank $\abs{V}-1$ with the property that $\Omega \wedge dQ=\sqrt{\det(B/\pi)}\wedge_{v \in V} dx_v$. This is a normalization of the Gelfand--Leray form \cite{Leray}.  

\begin{cor} \label{cor:Borel} The  Borel transform of the product of \eqref{def:BorelandPertub}  and $(2\pi i \tau)^{\abs{V}/2} $ is given as follows	\begin{equation}  \label{eq:integraprep2}	\mathcal{B}\left( (2\pi i \tau)^{\abs{V}/2}  \widehat{Z}^{\mathrm{Pert}}_{r}(M,\omega;\tau)\right)(z)= z^{\frac{\abs{V}}{2}-1} \int_{Q^{-1}(1)\cap \R^V}  G_{\omega,r}(i  x \sqrt{z} )  \Omega(x). \end{equation} \end{cor}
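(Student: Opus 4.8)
The plan is to start from the perturbative series \eqref{def:BorelandPertub}, multiply by $(2\pi i\tau)^{\abs{V}/2}$, and apply the Borel transform term by term. Recall from \eqref{def:Zhatpertubativecoef} that $\widehat{Z}_{\omega,r,l}$ is obtained by applying the differential operator $\tfrac{1}{4^l l!}\bigl(\sum_{v,w}b^{v,w}\partial_{x_v}\partial_{x_w}\bigr)^l$ to $G_{\omega,r}$ and evaluating at the origin. Since $\mathcal{B}$ sends $(2\pi i\tau)^{l+\abs{V}/2}$ to a constant multiple of $z^{l+\abs{V}/2-1}$ (with the precise Gamma-factor normalization fixed in \cref{def:Borel}), the left-hand side of \eqref{eq:integraprep2} becomes, up to that normalization, $z^{\abs{V}/2-1}\sum_{l\ge 0} c_l\, \widehat{Z}_{\omega,r,l}\, z^l$ for explicit coefficients $c_l$ absorbing the $4^l l!$ and the Gamma-factors. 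The first step, then, is to recognize this power series in $z$ as the Taylor expansion of an integral over the quadric $Q^{-1}(1)\cap\R^V$.

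The key mechanism is the Gelfand--Leray representation of the coefficients of a Gaussian-smeared function, which is exactly the content of \cite[Chapter 7]{Arnold88}. Concretely, for a holomorphic germ $g$ near the origin in $\R^V$ one has an identity of the shape
\begin{equation} \label{eq:GLidentity}
\int_{Q^{-1}(1)\cap\R^V} g(x\sqrt{z})\,\Omega(x) = \sum_{l\ge 0} \gamma_l\, z^{l}\,\Bigl(\Box^l g\Bigr)(0),
\end{equation}
where $\Box=\sum_{v,w}b^{v,w}\partial_{x_v}\partial_{x_w}$ is the constant-coefficient Laplacian dual to $Q$ and $\gamma_l$ are universal constants (Gamma-quotients times powers of $4$) determined by integrating monomials against $\Omega$ over the sphere-like section $Q^{-1}(1)$; the normalization $\Omega\wedge dQ=\sqrt{\det(B/\pi)}\wedge_v dx_v$ is precisely what makes these constants match the $\tfrac{1}{4^l l!}$-factor together with the $\Gamma(l+\abs{V}/2)$ appearing in the Borel transform. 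I would establish \eqref{eq:GLidentity} by passing to polar-type coordinates adapted to $Q$: write $x = \rho\,u$ with $Q(u)=1$, so that $\Omega(x)$ restricted to $Q^{-1}(\rho^2)$ scales homogeneously, substitute the Taylor expansion of $g$, and carry out the radial integral, which produces exactly the Gaussian moment integrals $\int_{0}^{\infty}\rho^{2l+\abs{V}-1}e^{-\rho^2}d\rho$-type factors after the identification with the Borel/Laplace pair. Matching homogeneity degrees on both sides then forces the prefactor $z^{\abs{V}/2-1}$ and pins down all constants.

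Putting the pieces together: apply \eqref{eq:GLidentity} with $g=G_{\omega,r}(i\,\cdot\,)$, observe that $\bigl(\Box^l (G_{\omega,r}(i\,\cdot))\bigr)(0) = (-1)^l(\Box^l G_{\omega,r})(0) = (-1)^l 4^l l!\,\widehat{Z}_{\omega,r,l}$ by \eqref{def:Zhatpertubativecoef}, and check that the sign $(-1)^l$ and the $4^l l!$ are absorbed correctly into $\gamma_l$ and into the definition of $\mathcal{B}$ so that the series reproduces $z^{\abs{V}/2-1}\mathcal{B}\bigl((2\pi i\tau)^{\abs{V}/2}\widehat{Z}^{\mathrm{Pert}}_r\bigr)$. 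Convergence of both sides near $z=0$ is automatic since $G_{\omega,r}$ is holomorphic at the origin (as already noted after \eqref{def:Zhatpertubativecoef}), so the formal identity of power series upgrades to an identity of holomorphic germs, and then of analytic continuations along the integration. The main obstacle I anticipate is purely bookkeeping: getting every normalization constant right simultaneously --- the $\sqrt{\det(B/\pi)}$ in the definition of $\Omega$, the $4^l l!$ in \eqref{def:Zhatpertubativecoef}, the $i$'s coming from the argument $i x\sqrt z$ and from $Q$ being associated to $-B$, and the Gamma-factor convention built into $\mathcal{B}$ --- so that no stray constant survives in \eqref{eq:integraprep2}. Everything else is a direct citation of \cite[Chapter 7]{Arnold88} combined with the definition of the Borel transform.
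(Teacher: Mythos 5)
Your proposal is correct in outline, and the constants do work out (with the paper's normalization $\Omega\wedge dQ=\sqrt{\det(B/\pi)}\bigwedge_v dx_v$ one gets exactly $\gamma_l=1/(4^l l!\,\Gamma(l+\abs{V}/2))$, which is what cancels against the $4^l l!$ in \eqref{def:Zhatpertubativecoef} and the $\Gamma$-factor in \cref{def:Borel}), but your route is genuinely different from the paper's. You prove the identity term by term via a Pizzetti-type mean-value formula on the quadric $Q^{-1}(1)$, i.e.\ you compute the moments of monomials against $\Omega$ and identify the degree-$2l$ piece with $\Box^l$ evaluated at the origin; the entire content of the proof then sits in the explicit evaluation of the universal constants $\gamma_l$, which you only sketch. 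The paper instead never computes a single moment: it starts from the localized Gaussian integral $I(\tau)$ of \eqref{def:Itau}, uses the Gelfand--Leray fiber integration along $Q$ to rewrite it as a genuine Laplace integral $\int_0^\infty e^{z/2\pi i\tau}B(z)\,dz$ with $B(z)=z^{\abs{V}/2-1}\int_{Q^{-1}(1)}G_{\omega,r}(ix\sqrt z)\Omega(x)$, and then invokes \cref{lem:resummation}(2) together with the uniqueness of Poincar\'e asymptotic expansions: stationary phase gives the coefficients $\widehat{Z}_{\omega,r,l}$, Watson's lemma gives $\Gamma(\abs{V}/2+l)b_l$, and equating them identifies the Borel transform with the germ of $B$. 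The one explicit input the paper does need --- that only integer powers of $z$ occur in $B(z)z^{1-\abs{V}/2}$, proved by the antipodal-map symmetry of $(Q^{-1}(1),\Omega)$ --- is also implicitly required in your approach (otherwise half-integer powers $z^{l+1/2}$ would survive from the odd-degree Taylor terms of $g(x\sqrt z)$); your polar-coordinate computation would deliver it, but you should state it. In summary: your argument is more explicit and self-contained, independent of the asymptotic machinery of \cref{thm:integral}, at the price of having to establish the Pizzetti constants; the paper's argument trades that computation for a citation of the Laplace/Borel duality and uniqueness of asymptotic expansions.
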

 We can and will assume that $\Gamma$ is choosen such that $\abs{V}$ is minimal plumbing graph trees for $M$. Thus we are justified in calling the function on the right hand side of \eqref{eq:integraprep2} the Borel transform of  $\widehat{Z}^{\mathrm{Pert}}_{r}(M,\omega)$, and in defining
\begin{equation} \label{def:BorelM}
\mathcal{B}_r(M,\omega;z)\coloneqq z^{\frac{\abs{V}}{2}-1} \int_{Q^{-1}(1)\cap \R^V}  G_{\omega,r}(i  x \sqrt{z} )  \Omega(x).
\end{equation}
Our study of the Borel transform is motivated by the case of Witten--Reshetikhin--Turaev quantum invariants \cite{RT90,Reshetikhin-Turaev}, where the Borel transform play an important role in connection to ideas motivated by the quantum Chern--Simons field theory picture \cite{Witten} of the Witten--Reshetikhin--Turaev TQFT, as shown in \cite{Andersen-Mistegard,GMP2016resurgence}. 

  Recently, there has been work \cite{creutzig2021} on developing a corresponding quantum field theory for non-semisimple TQFT \cite{CGP14}, and potentially the Borel transform will be of importance in this context also. Along these lines it would be interesting to understand further the significance of $\mathcal{B}_r(M,\omega)$ and the significance of the set of singularities of $\mathcal{B}_r(M,\omega)$.

We make the following remark on \cref{conj:Costantino-Gukov-Putrov}, which also clarify how it relates to results obtained in the work \cite{Costantino-Gukov-Putrov}, which is among the main inspirations for this article.

\begin{rem} \label{rem:analyticcontinuation} We make the following remarks. \begin{itemize} \item Let $D\subset \bbC$ be the unit disc. Our \cref{conj:Costantino-Gukov-Putrov} is a reformulation of \cite[Conjecture 1.2]{Costantino-Gukov-Putrov}, which is formulated with respect to the variable $q=\bm{e}(\tau)\in D$, i.e. they conjecture that the following non-tangential limit holds (with the obvious notational modification)
	\begin{equation} \label{eq:non-tangential}
	\lim_{q \rightarrow \bm{e}(1/r)} \widehat{Z}_r(M,\omega;q)=Z_r(M,\omega).
	\end{equation} \item One is naturally lead to consider limits where $q$ tends to $\bm{e}(1/r)$ along a ray emanating from the origin, and this is the reason for calling \cite[Conjecture 1.2]{Costantino-Gukov-Putrov} the radial-limit conjecture. \item In contrast to \cite[Conjecture 1.2]{Costantino-Gukov-Putrov}, our conjecture is formulated with respect to the variable $\tau \in \mathbb{H}$, and our condition that the limit is taken with respect to an arbitrary closed sector reflects the fact that the limit \eqref{eq:non-tangential} is taken in the non-tangential sense.
	\item In \cite{Costantino-Gukov-Putrov}, \cref{conj:Costantino-Gukov-Putrov} is proven for a certain class of $Y$-shaped plumbing graphs for which the adjacency matrix satisfy an open condition (see {\cite[Example 4.27]{Costantino-Gukov-Putrov}}). For this class,  our \cref{thm:integral} is a strengthening, which provides additional information.
	\end{itemize} 
	\end{rem}

 \subsection{Organization}

In \cref{sec:pre} we recall the definition of the relevant topological invariants. In \cref{sec:Asymptotics}, we recall basic definitions from asymptotic analysis. In \cref{sec:Proofs} we present our proofs. In \cref{sec:generating} we make a key step with \cref{prop:GeneratingFunction}, which makes precise the notion that $G_{\omega,r}$ is in a suitably sense the generating function for $\widehat{Z}_r(M,\omega)$, and which is proven using Gaussian reciprocity (\cref{prop:reciprocity}) and following and refining ideas from \cite{Costantino-Gukov-Putrov}. In Section \cref{sec:integral} we use \cref{prop:GeneratingFunction}, a well-known exact formula for Gaussian integrals and the method of steepest descent \cite{Debye} to prove \cref{thm:integral}. \cref{thm:main} is a direct consequence of \cref{thm:integral}. As a consequence of \cref{thm:main}, it holds in particular that \begin{equation} \label{eq:vertical}
\lim_{t \rightarrow +0} \widehat{Z}_r(M,\omega;it)=Z_r(M,\omega),
\end{equation} and in
 \cref{sec:AlternativeProof}, we give a an alternative proof of \eqref{eq:vertical} which is based on an application of the Euler--Maclaurin type asymptotic expansion stated in \cref{prop:asymp_lim} to the central identity of \cref{prop:GeneratingFunction}. The two approaches are complimentary (see \cref{rem:comparison}). 

\subsection*{Acknowledgements.} 

This work is supported by the grant from the Simons foundation, Simons Collaboration on New Structures in Low-Dimensional Topology grant no. 994320, the  ERC-SyG project, Recursive and Exact New Quantum Theory (ReNewQuantum) with funding from the European Research Council under the European Union's Horizon 2020 research and innovation programme, grant agreement no.~810573 and the Carlsberg Foundation grant no. CF20-0431.
The first author would like to thank Masanobu Kaneko and Yuji Terashima for giving many helpful comments.
The first author is supported by JSPS KAKENHI Grant Number JP23KJ1675.


\section{Quantum Invariants} \label{sec:pre}



\subsection{The GPPV Invariant}


As explained in  \cite{Costantino-Gukov-Putrov}, we have an identification $\Spin^c(M)\cong ( \delta + 2\Z^V) / 2B\Z^V$, where $ \delta \coloneqq (\deg v)_{v \in V} $.
We use this fact implicitly below.
\begin{dfn}[{\cite[Equation (A.29)]{GPPV}}] \label{def:GPPV} Set $\Delta=-(3\abs{V} + \tr B)/4$. Let $\fraks \in ( \delta + 2\Z^V) / 2B\Z^V$.  Define $\Theta_{-B, \fraks} (q;x) \in \Z[[q^{1/4}]][x_v^{\pm}, v \in V], F_{\Gamma} \in \mathcal{M}(\bbC^{V})$ and $\widehat{Z}_{\fraks} (M;q) \in q^{\Delta}  \Z[[q]]$  by
	\begin{align} \label{def:F}
	F_{\Gamma}(x) &\coloneqq \prod_{v \in V}(x_v-1/x_v)^{2-\deg(v)}, \\
	\Theta_{-B, \fraks} (q; x) &\coloneqq 
	\sum_{\ell \in \fraks + 2B(\Z^V)} q^{-\ell^T B^{-1} \ell/4} \prod_{v \in V} x_v^{\ell_v}, \\
	\widehat{Z}_{\fraks} (M;q) 
	&\coloneqq
	q^{\Delta} \,
	\mathrm{v.p.} \oint_{\abs{x_v}=1, v \in V}  \Theta_{-B, \fraks}(q; x)  
	F_{\Gamma}(x) \frac{dx_v}{2\pi\iu x_v}.
	\end{align}
\end{dfn}

Here we used the notation $\widehat{Z}_{\fraks} (M;q) $ for the series introduced in \eqref{eq:Zhat}. It was proven in \cite{GM} that the series $\widehat{Z}_{\fraks} (M;q) $ is a topological invariant.


\subsection{The CGPM Invariant} \label{sec:CGPM}

 
 Set $X_r\coloneqq \Z\setminus r \Z$. For $\alpha \in \bbC \setminus X_r$ define $d(\alpha)\coloneqq\sin(\pi \alpha/r)/\sin(\pi \alpha)$.
Let $\mathcal{L}_r$ denote the set of pairs $(T,\alpha)$, where $T$ is a framed and oriented  trivalent graph in $S^3$ and $\alpha$ is a coloring of the edges of $T$ by elements of $\bbC \setminus X_r$. Observe that a framed oriented link in $S^3$ can be seen as a trivalent graph with no vertices. Let $Z_r:\mathcal{L}_r \rightarrow \bbC$ be the invariant axiomatically defined in \cite[Section 2.2]{CGP14}, where it is denoted by $N_r$ (we use a different normalization for $d(\alpha)$, the effect of this change is accounted for in \cite[Remark 2.3]{CGP14}).

 Let $L\subset S^3$ be a framed and oriented surgery link for $M$. 
Let $b_{+}$ and $ b_{-} $ denote the number of positive and negative eigenvalues of the linking matrix of $L$ respectively. 
Denote by $V=\pi_0(L)$. For each component $L_v,v \in V$, let $m_v$ be the meridian of $L_v$, and define the element $\mu \in (\R/ 2 \Z)^V$ by
$\mu_v=\omega([m_v]) \in \R /2\Z$ for all $v \in V$, where $[m_v]$ is the associated homology class of the meridian $m_v.$  
Recall the notation $\xi=\exp(\pi i/r)$. Define 
\[
\Delta_{-} \coloneqq
\xi^{3/2} r^{1/2} \cdot
\begin{cases}
	\iu & \text{ if } r \equiv 1 \bmod 4, \\
	(1 - \iu) & \text{ if } r \equiv 2 \bmod 4, \\
	(-1) & \text{ if } r \equiv 3 \bmod 4
\end{cases}
\]
and let $\Delta_+=\overline{\Delta_{-}}$. 
Let 
\begin{equation} H_r\coloneqq\{-(r-1),-(r-3),...,(r-3),(r-1)\}.
\end{equation}  

\begin{dfn}[\cite{CGP14}] \label{def:Z_r} Define
	\begin{equation} \label{eq:def:Z_r}
	Z_r(M,\omega)\coloneqq \frac{1}{\Delta_+^{b_+} \Delta_{-}^{b_{-}}} \sum_{\alpha  \in H_r^V+\mu} Z_r(L,\alpha) \prod_{v\in V} d(\alpha_v) .
	\end{equation}
	\end{dfn} 
It is proven in \cite{CGP14} that the right hand side of \eqref{eq:def:Z_r}, which a priori depends on the surgery link $L$, is a topological invariant of $(M,\omega)$. We remark that for a surgery link $L$, we can take the one naturally encoded in the plumbing graph $\Gamma$. This link has an unknotted component $L_v$ for each vertex $v\in V$ of the graph, and the framing of $L_v$ is given by the weight of the vertex $v$. Two components $L_v,L_{v'}$ have linking number equal to unity if and only if $v$ and $v'$ are joined by an edge, otherwise $L_v\cup L_{v'}$ is a split disjoint union. For this surgery link, we have that $\mu=\tilde{\omega}$, where $\tilde{\omega} \in \R^V / 2\Z^V$ is the element that was defined in \cref{sec:integral}.


\subsection{Constants}


For $s \in \mathrm{Spin}(M)$ denote by $\mu(M,s)$ the Rokhlin invariant of $(M,s)$, and let $q_s:H_1(M,\Z)\rightarrow \Q /2 \Z$ be the refined quadratic form. Denote by $\mathcal{T}(M,[\omega])$ the Reidemeister torsion (with conventions as in \cite{Costantino-Gukov-Putrov}). Let $\sigma:\ H_1(M,\Z)\times\operatorname{Spin}(M) \rightarrow \operatorname{Spin}^c(M)$ be the canonical map. This is surjective.  
Let $\lk \colon H_1(M,\Z) \otimes H_1(M,\Z) \to \Q / \Z $ be the linking form. 
For $a,b,f \in H_1(M,\Z)$ and $s \in \mathrm{Spin}(M)$ define
\begin{align} \begin{split} \label{def:W}
W_{\mathrm{ev},r}(a,b,s)&= -\frac{r}{4}q_s(a)-\lk(a,b)-\frac{\omega(a)}{2},
\\W_{\pm,r}(a,b,f)&=- \frac{r\mp 1}{4}\lk(a,f,\mp b)- \frac{1}{2}\omega(a)\pm \lk(f,f).
\end{split}
\end{align}

\begin{dfn} \label{def:constans} For $\fraks \in \mathrm{Spin}^c(M)$, pick  $(b,\mathrm{s}) \in  H_1(M,\Z)\times\operatorname{Spin}(M)$ with $ \fraks=\sigma(b,\mathrm{s})$ and define 
	\begin{align} \label{coeffsummary}
	& z_r(\omega,\fraks)\coloneqq	\frac{\mathcal{T}(M, [\omega])}{\abs{H_1(M,\Z)}} 
	\begin{cases}
	\mp e^{\mp \frac{\pi \iu}{2} \mu(M,s)}
	\sum_{a, f \in H_1(M, \Z)}
	e^{2\pi \iu W_{\pm,r}(a,b,f), }
	&\text{ if } r=\pm 1 \mod 4,
	\\
	\! \sqrt{|H_1(M,\Z)|} \sum_{a \in H_1(M, \Z)}
	e^{2\pi \iu W_{\mathrm{ev},r}(a,b,s)},
	&\text{ if } r=2 \mod 4
	\end{cases}
	\\& \label{def:lambda}
\lambda\coloneqq\Delta_{-}^{-\abs{V}}\exp\left(-\pi i \frac{\text{Tr}(B)(r-1)^2}{2r})\right)\calT (M, [\omega]).
\end{align} 
\end{dfn}


\section{Basic Definitions from Asymptotic Analysis} \label{sec:Asymptotics}


We briefly recall key definitions from asymptotic analysis. We refer the interested reader to \cite{Olver}.




\begin{dfn}
	Let $\phi \colon S \rightarrow \bbC$ be a function on a sector $S\subset \bbC$ with a specified branch of the logarithm. A Puiseux series $\tilde{\phi}(t)=\sum_{j=m}^{\infty} \phi_j  t^{j/n}, m \in \Z,n \in \Z_+, (\phi_j)_{j=m}^{\infty} \subset \bbC,$ is a Poincare asymptotic expansion of $\phi(t)$ for $t$ tending to $0$ if for every integer $L \geq m$ it holds that
	\begin{equation}
	\phi(t)=\sum_{j=m}^{L} \phi_j  t^{j/n}+ \mathcal{O}(t^{(L+1)/n}), \forall t \in  S.
	\end{equation}
\end{dfn}

We recall that if a function $\phi$ admits a Poincare asymptotic expansion  $\tilde{\phi}$, then this is in fact uniquely determined by $\phi$, and we write $\phi(t) \sim \tilde{\phi}(t)$ to designate that $\tilde{\phi}$ is the asymptotic expansion of $\phi$.

\begin{dfn} \label{def:Borel} Let $\alpha \in \Q_+$, and let $\tilde{\phi}$ be a Puiseux series of the form $\tilde{\phi}(t)=t^{\alpha}\sum_{j=0}^{\infty} \phi_j  t^{j},  (\phi_j)_{j=m}^{\infty} \subset \bbC$. The Borel transform of $\tilde{\phi}$ is the Puiseux series $\mathcal{B}(\tilde{\phi})$ defined by
	\begin{equation}
	\mathcal{B}(\tilde{\phi})(z)=\sum_{l=0}^{\infty} \frac{\phi_l}{\Gamma(\alpha+l)}   z^{\alpha+l-1}.
	\end{equation}
	We say the series $\tilde{\phi}$ is resurgent if $\mathcal{B}(\tilde{\phi})$ is the germ of a holomorphic function on the universal cover of $\bbC \setminus \Omega(\tilde{\phi})$, where $\Omega(\tilde{\phi}) \subset \bbC$ is a discrete subset containing the origin. 
\end{dfn} 
For an introduction to resurgence, we refer to \cite{CS16}. We recall the following Lemma, which provides motivation for the Borel transform, and which is very well-known, see for instance \cite[Chapter 5]{CS16}.
\begin{lem}\label{lem:resummation} The following holds. \begin{enumerate} \item If a Puiseux series $\tilde{\phi}$ is resurgent, and if $\mathcal{B}(\tilde{\phi}) (z)\in \mathcal{O}(\exp(c_0\lvert z \rvert)) $ for some $c_0 \in (0,\infty)$  on a half-axis $\R_+ e^{i\theta} \subset \bbC \setminus \Omega(\tilde{\phi}), \theta \in [0,2\pi)$, then the expression \eqref{def:resum} defines a holomorphic function on $\{ t \in \bbC: \Re(e^{\pi i\theta}/t)>c_0\}$, which has $\tilde{\phi}$ as Poincare asymptotic expansion as $t$ tends to $0$
\begin{align} \label{def:resum}
\phi_{\theta}(t)\coloneqq&\int_{\R_+ e^{i\theta}} e^{-z/t} \mathcal{B}(\tilde{\phi})(z) dz \sim \tilde{\phi}(t).
\end{align}
\item Conversely, if $B \colon \R_+\rightarrow \bbC$ is an absolutely integrable  function which admits a convergent expansion of the form $B(z)=\sum_{l=0} b_l  z^{\alpha+l-1}$ with $\alpha \in \Q_+$ and $(b_l)_{l=0}^{\infty} \subset \bbC$, then the following Poincare asymptotic expansion holds as $t$ tends to $0$
\begin{equation}
\int_0^{\infty} e^{-z/t} B(z) dz \sim \tilde{\phi}(t)\coloneqq\sum_{l=0}^{\infty} \Gamma(\alpha+l) b_l  t^{\alpha+l},
\end{equation}
and it holds that the Borel transform $\mathcal{B}(\tilde{\phi})$ is equal to the germ of $B$ at $0$.
\end{enumerate}
\end{lem}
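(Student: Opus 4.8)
The plan is to derive both parts from the classical Watson lemma (see, e.g., \cite{Olver} or \cite[Chapter 5]{CS16}), organising the estimates so that the error terms are uniform on closed subsectors of the domain of $\phi_\theta$. For part (1), I would first verify that \eqref{def:resum} defines a holomorphic function on the stated domain. Parametrising the half-axis as $z=s\rme^{\iu\theta}$ with $s\in(0,\infty)$, the resurgence hypothesis makes the analytic continuation of $\mathcal{B}(\tilde\phi)$ along the ray well defined (the ray avoids $\Omega(\tilde\phi)$), and the modulus of the integrand equals $\rme^{-s\Re(\rme^{\iu\theta}/t)}\lvert\mathcal{B}(\tilde\phi)(s\rme^{\iu\theta})\rvert$; the growth bound $\mathcal{B}(\tilde\phi)\in\mathcal{O}(\exp(c_0\lvert z\rvert))$ combined with $\Re(\rme^{\iu\theta}/t)>c_0$ furnishes an integrable majorant for $s\ge 1$, while near $s=0$ the leading behaviour $z^{\alpha-1}$ with $\alpha>0$ is locally integrable, so dominated convergence and Morera's theorem yield holomorphy.

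For the asymptotic expansion I would, for each $L$, use that near $0$ the continuation of $\mathcal{B}(\tilde\phi)$ agrees with its defining Puiseux series to write $\mathcal{B}(\tilde\phi)(z)=\sum_{l=0}^{L}\frac{\phi_l}{\Gamma(\alpha+l)}z^{\alpha+l-1}+R_L(z)$ with $R_L(z)=\mathcal{O}(z^{\alpha+L})$ near $0$ on the ray, insert this into \eqref{def:resum}, and split each resulting ray-integral at a small $\delta>0$. On the part near $0$, the substitution $z=\rme^{\iu\theta}u$ reduces the monomial terms to the Euler integral $\int_0^{\infty}\rme^{-u/t}u^{\alpha+l-1}\,du=\Gamma(\alpha+l)t^{\alpha+l}$ (legitimate since $\Re(\rme^{\iu\theta}/t)>0$) up to an exponentially small error from extending the integral to $\infty$, while $\int\rme^{-z/t}R_L(z)\,dz$ over that part is $\mathcal{O}(t^{\alpha+L+1})$ uniformly on any closed subsector, where $\Re(\rme^{\iu\theta}/t)\ge\kappa/\lvert t\rvert$ for some $\kappa>0$. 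On the remaining part of the ray ($s\ge\delta$), the exponential bound on $\mathcal{B}(\tilde\phi)$ and the polynomial bound on the subtracted terms give a contribution of size $\mathcal{O}(\rme^{-\delta(\Re(\rme^{\iu\theta}/t)-c_0)})$, which is $o(t^{N})$ for every $N$ as $t\to 0$ within a closed subsector. Collecting the contributions yields $\phi_\theta(t)=\sum_{l=0}^{L}\phi_l t^{\alpha+l}+\mathcal{O}(t^{\alpha+L+1})$, i.e.\ $\phi_\theta\sim\tilde\phi$.

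For part (2), which is Watson's lemma in its textbook form, let $\rho>0$ be such that $B(z)=\sum_{l\ge 0}b_l z^{\alpha+l-1}$ converges for $\lvert z\rvert<\rho$, and split $\int_0^{\infty}=\int_0^{\rho/2}+\int_{\rho/2}^{\infty}$. On the tail, $\lvert\rme^{-z/t}\rvert\le\rme^{-(\rho/2)\Re(1/t)}$ for $z\ge\rho/2$ together with the absolute integrability of $B$ makes the contribution exponentially small; on $[0,\rho/2]$, writing $B(z)=\sum_{l=0}^{L}b_l z^{\alpha+l-1}+\tilde R_L(z)$ with $\lvert\tilde R_L(z)\rvert\le C_L z^{\alpha+L}$ and repeating the near-$0$ estimate from part (1) gives $\int_0^{\infty}\rme^{-z/t}B(z)\,dz\sim\sum_{l\ge 0}\Gamma(\alpha+l)b_l t^{\alpha+l}=\tilde\phi(t)$. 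Finally, the identity $\mathcal{B}(\tilde\phi)=B$ near $0$ is immediate from \cref{def:Borel}: with $\phi_l=\Gamma(\alpha+l)b_l$ one has $\mathcal{B}(\tilde\phi)(z)=\sum_{l\ge 0}\frac{\phi_l}{\Gamma(\alpha+l)}z^{\alpha+l-1}=B(z)$.

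All of the above is classical, and the only step that genuinely requires care is the uniformity of the error terms over closed subsectors — specifically, arranging that the error from extending to $\infty$ and the tail integral in part (1) are uniformly $\mathcal{O}(t^{\alpha+L+1})$, in fact exponentially small. If preferred, the whole lemma can simply be cited from \cite[Chapter 5]{CS16} or from a standard treatment of Watson's lemma such as \cite{Olver}, and I expect the published proof to do exactly that.
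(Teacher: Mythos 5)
Your proposal is correct: it is the standard Watson's-lemma argument (split the ray at a small $\delta$, replace the Borel transform by its Puiseux expansion near the origin, evaluate the resulting Euler integrals, and control the tail using the exponential-type bound together with $\Re(e^{i\theta}/t)>c_0$), and the identification $\mathcal{B}(\tilde{\phi})=B$ in part (2) is indeed immediate from the definition. As you anticipated, the paper supplies no proof at all for this lemma; it is stated as well known and referred to \cite[Chapter 5]{CS16}, so your written-out argument is strictly more detailed than what appears in the text, and the points you flag as needing care (uniformity of the error terms over closed subsectors, and the exponentially small contributions from the tail and from extending the truncated integrals to infinity) are exactly the right ones.
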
 In the case the first assumption of the above lemma holds, one says that $\phi_{\theta}$ is the Borel--Laplace resummation of $\tilde{\phi}$ of in direction $e^{i\theta}.$ It is in this sense, that the Borel transform is the formal inverse of the Laplace transform. 
The above concepts are very well illustrated with the example of an exponential integral with holomorphic phase function \cite{BerryHowls,DH02}.

\section{Proofs} \label{sec:Proofs}





\subsection{The Generating Function} \label{sec:generating}


\begin{prop} \label{prop:G(1)=Z} We have that $G_{\omega,r}(0)=Z_r(M,\omega).$
	\end{prop}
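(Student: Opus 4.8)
The plan is to evaluate the right-hand side of \eqref{eq:def:Z_r} explicitly, taking for $L$ the surgery link naturally encoded in $\Gamma$, and to recognise the outcome as $G_{\omega,r}(0)$. First I would use the explicit form of the CGP link invariant for this particular $L$: it is a union of unknots $L_v$, $v\in V$, with framings $B_{vv}$ and pairwise linking numbers $B_{vw}\in\{0,1\}$ and no higher linking, so the ribbon structure of $\mathcal{C}$ forces $Z_r(L,\alpha)$ to factor into a twist for each unit of self-framing, a reduced Hopf-link (``$S'$-matrix'') value for each edge, and a modified-dimension contribution at each vertex coming from the fusion of the $\deg(v)$ strands meeting there. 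For $U_{\xi}^H(\mathfrak{sl}(2,\bbC))$ at $\xi=e^{\pi i /r}$ the twist and the reduced $S'$-matrix are quadratic Gaussian monomials in the colours, and the modified dimension of a typical module is proportional to $d(\alpha)=\sin(\pi\alpha/r)/\sin(\pi\alpha)$. Substituting into \eqref{eq:def:Z_r}, using that $B$ is negative definite so that $b_+=0$, $b_-=\abs{V}$ and hence $\Delta_+^{-b_+}\Delta_-^{-b_-}=\Delta_-^{-\abs{V}}$ (matching the corresponding factor of $\lambda$ in \eqref{def:lambda}), and combining the surgery weights $\prod_v d(\alpha_v)$ with the modified dimensions, I would arrive at an identity of the shape
\[
Z_r(M,\omega)=\Delta_-^{-\abs{V}}\,c^{\,\abs{V}}\,\zeta\sum_{\alpha\in H_r^V+\mu}\bm{e}\!\big(a\,\alpha^{\mathrm{T}}B\alpha\big)\prod_{v\in V}d(\alpha_v)^{2-\deg v}
\]
for explicit $a\in\Q$, $c\in\bbC^{\times}$ and a root of unity $\zeta$ collecting the constant parts of the Gaussian monomials; this is in essence the computation of \cite[Section~3.1]{Costantino-Gukov-Putrov} (based on \cite{CGP14}).

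Next I would extract the Reidemeister torsion. Writing $d(\alpha_v)=\sin(\pi\alpha_v/r)/\sin(\pi\alpha_v)$ and using that for $\alpha\in H_r^V+\mu$ one has $\alpha_v=h_v+\tilde\omega_v$ with $h_v\in\Z$ \emph{all congruent to $r-1$ modulo $2$}, we get $\sin(\pi\alpha_v)=(-1)^{r-1}\sin(\pi\tilde\omega_v)$, independent of $\alpha$; moreover the accumulated sign $(-1)^{(r-1)\sum_v(2-\deg v)}$ equals $+1$ because $\sum_v(2-\deg v)=2$ ($\Gamma$ being a tree). Hence
\[
\prod_{v\in V}d(\alpha_v)^{2-\deg v}=\Big(\prod_{v\in V}\sin(\pi\tilde\omega_v)^{\deg v-2}\Big)\prod_{v\in V}\sin(\pi\alpha_v/r)^{2-\deg v},
\]
and the first, $\alpha$-independent factor is, up to an explicit unit, the Reidemeister torsion $\calT(M,[\omega])$, by the product formula for the torsion of a plumbed three-manifold with a $\mathrm{U}(1)$ flat connection; this is the origin of the torsion factor in $\lambda$.

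Finally I would perform the change of summation variable $\alpha\mapsto\beta=\tfrac12(\alpha+c_0 e)$, with $c_0\in\Z$ depending only on $r\bmod 2$, chosen so as to restrict to a bijection $H_r^V+\mu\xrightarrow{\ \sim\ }\tfrac12(\tilde\omega+re)+\Z^V/r\Z^V$ (both sets having $r^{\abs{V}}$ elements). Under it $\sin(\pi\alpha_v/r)^{2-\deg v}$ turns, up to signs and powers of $2i$ which become trivial after taking the product over $v$, into $\big(\bm{e}(\beta_v/r)-\bm{e}(-\beta_v/r)\big)^{2-\deg v}=F_v(\bm{e}(\beta_v/r))$, while $\bm{e}(a\,\alpha^{\mathrm{T}}B\alpha)$ turns into $\bm{e}(-Q(\beta)/r)$ times a $\beta$-independent root of unity. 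Collecting every scalar and checking that the product equals precisely $\calT(M,[\omega])\,\Delta_-^{-\abs{V}}\exp\!\big(-\pi i\,\mathrm{Tr}(B)(r-1)^2/(2r)\big)=\lambda$, one reads off $Z_r(M,\omega)=\lambda\sum_{\beta}\bm{e}(-Q(\beta)/r)\prod_{v}F_v(\bm{e}(\beta_v/r))=G_{\omega,r}(0)$ from \eqref{def:G}.

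The hard part is entirely the bookkeeping of constants: pinning down the normalisation of the modified trace/modified dimension in $\mathcal{C}$, the precise twist and $S'$-matrix monomials, and --- the genuinely delicate point --- choosing the affine substitution $\alpha\mapsto\beta$ so that it is simultaneously a bijection of the two index lattices \emph{and} sends each sine factor to the matching $F_v$-factor and the Gaussian to $\bm{e}(-Q(\beta)/r)$, uniformly over all admissible $r$ (including the split into the residue classes of $r$ modulo $4$) and all $\tilde\omega$, with the residual scalar coming out to be precisely $\lambda$. One also needs the ``separated contributions'' factorisation of $Z_r(L,\alpha)$ for a union of unknots with $0$/$1$ pairwise linking, and the product formula for the Reidemeister torsion of a plumbed three-manifold; both are available in this setting from \cite{CGP14,Costantino-Gukov-Putrov}.
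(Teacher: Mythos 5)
Your proposal follows essentially the same route as the paper: evaluate the CGP surgery formula on the plumbing link using the factorization of $Z_r(L_\Gamma,\alpha)$ into twist, $S$-matrix and modified-dimension contributions (the paper's equation \eqref{Z_rLink}, derived from the axioms of \cite[Section 2.2]{CGP14}), use $b_+=0$, $b_-=\abs{V}$, pull the $\alpha$-independent $\prod_v\sin(\pi\tilde\omega_v)^{\deg v-2}$ out as the Reidemeister torsion via $F_\Gamma((\bm{e}(\tilde\omega_v/2))_v)^{-1}=\calT(M,[\omega])$, check that the accumulated sign is $+1$, and finish with the halving/reindexing of the summation lattice that identifies the remaining Gauss sum with $G_{\omega,r}(0)/\lambda$. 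The only differences are cosmetic (you verify the sign via $\sum_v(2-\deg v)=2$ rather than the parity of $\sum_v k_v\deg v$, and you leave the affine shift in the change of variables unspecified), so the approaches coincide.
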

We remark that the content of \cref{prop:G(1)=Z} is a simple reformulation of a well-known formula for $Z_r(M,\omega)$ stated in \cite[Section 3.1]{Costantino-Gukov-Putrov}. We include it here to make the article self-contained.
\begin{proof}
	 Denote by $L_{\Gamma}$ the surgery link of $M$ obtained from the plumbing graph $\Gamma.$ Let $E$ denote the set of edges of $\Gamma$. For any $\alpha, \beta \in \bbC$ define
	\begin{equation} \label{def:structureconstants}
	\begin{matrix} & S(\alpha,\beta)\coloneqq\xi^{\alpha \beta}, & T(\alpha)\coloneqq\xi^{\frac{\alpha^2-(r-1)^2}{2}}.
	\end{matrix}
	\end{equation} 
	Then we have for all $\alpha \in \mu+H_r^V$ that 
	\begin{equation} \label{Z_rLink}
	Z_r(L_{\Gamma},\alpha)=\prod_{v \in V} d(\alpha_v)^{1-\deg(v)}T(\alpha_v)^{B_{vv}} \prod_{(v,w) \in E} S(\alpha_v,\alpha_w).
	\end{equation}
	This follows from the axioms for $Z_r$ listed in \cite[Section 2.2]{CGP14}. For any $\alpha \in \tilde{\omega}+H_r^V$ we will write $k=\alpha-\tilde{\omega} \in H_r^V$.  From \cref{def:Z_r}, we obtain that
	\begin{align} \begin{split}  \label{eq:Z_r1}
	Z_r(M,\omega)&=\frac{1}{\Delta_+^{b_+}\Delta_{-}^{b_{-}}}\sum_{\alpha \in \tilde{\omega}+H_r^V} \prod_{v \in V} d(\alpha_v)^{2-\deg(v)}T(\alpha_v)^{B_{vv}} \prod_{(v,w) \in E} S(\alpha_v,\alpha_w)
	\\&=\frac{1}{\Delta_+^{b_+}\Delta_{-}^{b_{-}}}\sum_{\alpha \in \tilde{\omega}+H_r^V} \prod_{v \in V} \left(\frac{\xi^{\alpha_v}-\xi^{-\alpha_v}}{\xi^{r\alpha_v}-\xi^{-r\alpha_v}}\right)^{2-\deg(v)}\xi^{B_{v,v}\frac{\alpha_v^2-(r-1)^2}{2}} \prod_{(v,w) \in E} \xi^{\alpha_v \alpha_w}
	\\&=\frac{\xi^{-\frac{\text{Tr}(B)(r-1)^2}{2}}}{\Delta_{-}^{\abs{V}}}\sum_{\alpha \in \tilde{\omega}+H_r^V} (-1)^{\sum_{v \in V} k_v\deg(v)}\prod_{v \in V} \left(\frac{\xi^{\alpha_v}-\xi^{-\alpha_v}}{e^{\pi i\tilde{\omega}_v}-e^{-\pi i \tilde{\omega}_v}}\right)^{2-\deg(v)}\xi^{B_{v,v}\frac{\alpha_v^2}{2}} \prod_{(v,w) \in E} \xi^{\alpha_v \alpha_w},
	\end{split}  
	\end{align}  
	where, in the second equality we simply used the definition \eqref{def:structureconstants}, and in the third equality we used $b_+=0, b_{-}=\abs{V}$. Recall that $B_{v,w}=1$ for every $(v,w) \in E$. Therefore, we have for all $\alpha \in H_r^V+\tilde{\omega}$ that
	\begin{equation} \label{eq:simplifyMatrix}
	\prod_{v \in V} \xi^{B_{v,v}\frac{\alpha_v^2}{2}} \prod_{(v,w) \in E} \xi^{\alpha_v \alpha_w}=\prod_{v \in V} \xi^{B_{v,v}\frac{\alpha_v^2}{2}} \prod_{(v,w) \in E} \xi^{\alpha_v B_{v,w} \alpha_w}=\xi^{\alpha^T B \alpha}.
	\end{equation} 
	Notice that 
	$(-1)^{\sum_{v \in V} k_v\deg(v)}=1$, because either all $k_v$ are even, in which case it is trivial, or all $k_v$ are odd, in which case it follows from the fact that $\sum_{v} \deg(v)$ is even. By using $(-1)^{\sum_{v \in V} k_v\deg(v)}=1$, equation \eqref{eq:simplifyMatrix} and the function $F_{\Gamma}$ introduced in, we can rewrite \eqref{eq:Z_r1} as follows
	\begin{equation} \label{eq:foerste}
	Z_r(M,\omega)=\frac{\xi^{-\frac{\text{Tr}(B)(r-1)^2}{2}}}{\Delta_{-}^{\abs{V}} F_{\Gamma}((\bm{e}(\tilde{\omega}_v/2))_{v\in V})} \sum_{\alpha \in \tilde{\omega}+H_r^V} F_{\Gamma}((\bm{e}(\alpha_v/(2r)))_{v\in V}) \bm{e}\left(\alpha^TB\alpha/(4r) \right).
	\end{equation}
	As explained in \cite[Section 3.1]{Costantino-Gukov-Putrov}, we have that
	$F \left( \left( \bm{e}\left( \tilde{\omega}_v / 2 \right) \right)_{v \in V}  \right)^{-1} = \calT (M, [\omega])$. Thus we have that
	\begin{equation}
	\xi^{-\frac{\text{Tr}(B)(r-1)^2}{2}}(\Delta_{-}^{\abs{V}} F_{\Gamma}((\bm{e}(\tilde{\omega}_v/2))_{v\in V}))^{-1}=\lambda,
	\end{equation} 
	where $\lambda$ is the constant introduced in equation \eqref{def:lambda}. Thus, by inspecting equation \eqref{eq:foerste} and recalling the definition of $G_{\omega,r}$ given in equation \eqref{def:G}, we see that it suffices to show that
\begin{equation}
\sum_{\alpha \in \tilde{\omega}+H_r^V} 
F_{\Gamma}((\bm{e}(\alpha_v/2r))_{v\in V}) 
\bm{e}\left(\alpha^TB\alpha/4r \right)
=
\sum_{\alpha \in \frac{1}{2}(\tilde{\omega} + r \varepsilon) + \Z^V / r \Z^V} 
F_{\Gamma} \left( \left( \bm{e}\left( \frac{\alpha_v}{r} \right) \right)_{v \in V} \right)
\bm{e} \left( \frac{1}{r}\transpose{\alpha}B\alpha \right),
\end{equation} and this follows from the fact for any $\alpha$ and any $ m \in \Z^{V} $ we have that
	\begin{equation}
	(\alpha+2rm)^t B (\alpha+2rm) \equiv \alpha^T B \alpha + 2r \transpose{\tilde{\omega}} B m \bmod 4r\Z.
\end{equation} This finishes the proof. \end{proof}


 Recall the notation $Q$ for the quadratic form associated with $-B$. Similarly, we let $Q^{-1}$ denote the quadratic form associated with $-B^{-1}$. Let $\nu \in \mu_2^V=\{1,-1\}^V$. Set $\lambda_{\nu}\coloneqq\lambda  \prod_{v \in V}\nu_v^{\deg(v)}$ and define the diagonal matrix $I_{\nu}\coloneqq\diag((\nu_v)_{v\in V})$ and the quadratic form $Q_{\nu}^{-1}\coloneqq-\transpose{I_{\nu}}B^{-1}I_{\nu}$.  Define the following two sequences
 \begin{align} & \label{def:Fseq} (F^{\nu}_{\ell})_{ \ell \in (\deg(v))_{v \in V}+ 2\Z_{\geq-1}^V} \subset \Z,
 \\& \label{def:Gseq} (G^{\nu}_{\omega,r,\ell})_{\ell \in (\deg(v))_{v\in V}+2\Z^V_{\geq -1}} \subset \bbC,
 \end{align} as follows. The sequence \eqref{def:Fseq} is implicitly  defined by the Laurent expansion of $F_{\Gamma}$ with respect to the variable $(x_v^{\nu_v})_{v \in V}$ given in \eqref{eq:Fexpansion}, which is valid when $\min \{ \abs{x_v^{\nu_v}}, v \in V \}$ is smaller than unity, and the sequence \eqref{def:Gseq} is defined in terms of the sequence \eqref{def:Fseq} by \eqref{def:G_l}
\begin{align} \label{eq:Fexpansion}
F_{\Gamma}(x)=&\sum_{\ell \in (\deg(v))_{v\in V}+2\Z^{V}_{\geq -1}} F^{\nu}_{\ell}  x^{\el},
\\ \label{def:G_l}
G^{\nu}_{\omega,r,\ell}\coloneqq&\lambda_{\nu} \sum_{\alpha \in \frac{1}{2}(\tilde{\omega} + r e) + \Z^V / r \Z^V}
\bm{e}\left( \frac{-Q(\alpha)}{r} \right) \bm{e}\left(\frac{\alpha  \el}{r}\right)  F_{\ell}.
\end{align} The expansion \eqref{eq:Fexpansion} is obtained by taking, for each $v \in V$, a suitable power of the geometric series. In particular, we remark that $\abs{F^{\nu}_{\ell}}$ grows polynomially in $\ell$. Moreover, if we write $F_{\ell}=F^{e}_{\ell},$ where $e\in \mu_2^V$ is the neutal element, then we have for all $\ell \in (\deg(v))_{v \in V}+2\Z_{\geq -1}^V$ that
\begin{equation}  \label{eq:Fsymmetry} 
F^{\nu}_{\ell}=\left(\prod_{v\in V} \nu_v^{\deg(v)}  \right)F_{\ell}.
\end{equation} 
Finally, for all  $\nu \in \mu_2^V, x\in \bbC^V$ and $\tau \in \mathbb{H}$ we define
\begin{align}
\label{def:varepsilon} &\varepsilon(x,\nu)\coloneqq\min \left\{ \abs{e^{\nu_vx_v}}, v \in V \right\},
	\\& \label{eq:Zhatmu} \widehat{Z}^{\nu}_{\omega,r}(\tau)\coloneqq\sum_{\ell \in (\deg(v))_{v\in V}+2\Z_{\geq -1}^V} G^{\nu}_{\omega,r,\ell}  \bm{e}\left( \frac{\tau Q_{\nu}^{-1}(\ell)}{4}\right).
\end{align}

\begin{prop} \label{prop:GeneratingFunction}  We have the following two convergent expansions,  the first of which is valid when the quantity \eqref{def:varepsilon} is smaller than unity, and the second of which is valid for all $\tau \in \mathbb{H}$
	\begin{align} \label{eq:Gexpansion}
	G_{\omega,r}(x)&=\sum_{\ell \in (\deg(v))_{v \in V}+ 2\Z_{\geq-1}^V} G^{\nu}_{\omega,r,\ell}  \exp\left( \transpose{(\el)} x\right),
	\\ \label{eq:CentralIdentity} 
	\widehat{Z}_r(M,\omega;\tau)
	&=
	2^{-\abs{V}} \sum_{\nu \in \mu_2^V } \widehat{Z}^{\nu}_{\omega,r}(\tau).
	\end{align}
	\end{prop}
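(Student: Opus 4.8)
The plan is to prove the two expansions in \cref{prop:GeneratingFunction} essentially independently, with the second one following from the first by a Fourier-type averaging over the sign vectors $\nu \in \mu_2^V$.

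\textbf{Step 1: the Laurent expansion \eqref{eq:Gexpansion}.} Fix $\nu \in \mu_2^V$. I would start from the definition \eqref{def:G} of $G_{\omega,r}$ and expand the factor $\prod_{v\in V} F_v\!\left(\bm{e}(\alpha_v/r + x_v/(2\pi i))\right) = F_{\Gamma}\!\left((\bm{e}(\alpha_v/r)\, \bm{e}(x_v/(2\pi i)))_{v\in V}\right)$ using the Laurent expansion \eqref{eq:Fexpansion}, which converges precisely when $\varepsilon(x,\nu) = \min_v \abs{e^{\nu_v x_v}} < 1$ (here one should check that the factor $\bm{e}(\alpha_v/r)$ is a root of unity and so does not affect the region of convergence, only the coefficients). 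Substituting and interchanging the finite sum over $\alpha$ with the series over $\ell$ — legitimate because the $\alpha$-sum is finite and, as remarked after \eqref{def:G_l}, $\abs{F^{\nu}_{\ell}}$ grows only polynomially — one collects exactly the coefficient $\lambda_\nu \sum_{\alpha} \bm{e}(-Q(\alpha)/r)\,\bm{e}(\alpha\cdot I_\nu\ell/r)\,F_\ell$ in front of $\exp(\transpose{(I_\nu\ell)}x)$; matching this against \eqref{def:G_l} and \eqref{eq:Fsymmetry} gives \eqref{eq:Gexpansion}. The bookkeeping with the diagonal matrix $I_\nu$ and the symmetry \eqref{eq:Fsymmetry} is the one slightly delicate point, but it is purely formal.

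\textbf{Step 2: the central identity \eqref{eq:CentralIdentity}.} Here I would go back to \cref{def:GPPV} and write $\widehat{Z}_{\fraks}(M;q)$ as a principal-value contour integral over the torus $\abs{x_v}=1$ of $\Theta_{-B,\fraks}(q;x) F_{\Gamma}(x)$, then assemble $\widehat{Z}_r(M,\omega;q)$ from \eqref{def:Z_tau} as the weighted sum over $\fraks \in \Spin^c(M)$ with weights $z_r(\omega,\fraks)$, evaluated at $q = \bm{e}(\tau + 1/r)$. The idea, following \cite{Costantino-Gukov-Putrov}, is that summing the theta functions $\Theta_{-B,\fraks}$ against the CGP weights $z_r(\omega,\fraks)$ over all of $\Spin^c(M) \cong (\delta + 2\Z^V)/2B\Z^V$ produces, via Gaussian reciprocity (the forthcoming \cref{prop:reciprocity}), a lattice sum over $\ell \in (\deg v)_v + 2\Z^V$ with a Gaussian factor $\bm{e}(\tau Q^{-1}(\ell)/4)$ and the extra root-of-unity factors coming from the shift $q \mapsto \bm{e}(\tau+1/r)$ reorganizing into precisely the constants $G^{\nu}_{\omega,r,\ell}$ of \eqref{def:G_l}. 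The contour integral of $F_\Gamma$ against a monomial $x^\ell$ picks out the Laurent coefficient $F_\ell$ (in the principal-value sense, which matches the symmetric sum over the $2^{\abs{V}}$ choices of $\nu$, i.e. over which side of each pole on $\abs{x_v}=1$ one passes), and this is exactly where the average $2^{-\abs{V}}\sum_{\nu}$ enters: each $\nu$ records one of the $2^{\abs{V}}$ ways of resolving the principal value, and the Laurent coefficients on the various contours are related by \eqref{eq:Fsymmetry}. Comparing with the definition \eqref{eq:Zhatmu} of $\widehat{Z}^{\nu}_{\omega,r}$ — noting $Q^{-1}_\nu(\ell) = Q^{-1}(I_\nu\ell)$ — then yields \eqref{eq:CentralIdentity}.

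\textbf{Main obstacle.} The genuinely hard part is Step 2: matching the CGP invariant side, which is a sum over $\Spin^c(M)$ of link invariants $Z_r(L,\alpha)$ weighted by the arithmetic constants $z_r(\omega,\fraks)$ built from Reidemeister torsion, Rokhlin invariants, linking and quadratic forms (\cref{def:constans}), against the GPPV/$\widehat Z$ side. This requires carefully applying Gaussian reciprocity to convert the $H_1(M,\Z)$-sums in \eqref{coeffsummary} into a dual lattice sum, tracking all the prefactors ($\Delta_\pm^{b_\pm}$, the $\lambda$ of \eqref{def:lambda}, the $q^\Delta$ normalization, and the shift by $1/r$ inside $\bm{e}(\tau+1/r)$) so that everything collapses onto the clean form \eqref{def:G_l}–\eqref{eq:Zhatmu}; keeping the principal-value/contour conventions consistent with the $2^{-\abs{V}}\sum_\nu$ average is where most of the care is needed. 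I would isolate the reciprocity computation as the separate \cref{prop:reciprocity} and treat its consequences here as bookkeeping, following and refining \cite{Costantino-Gukov-Putrov, M_GPPV}.
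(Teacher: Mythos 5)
Your proposal follows essentially the same route as the paper: expand $F_{\Gamma}$ via \eqref{eq:Fexpansion} to get \eqref{eq:Gexpansion}, resolve the principal-value torus integral in \cref{def:GPPV} as the average $2^{-\abs{V}}\sum_{\nu}$ over contours $\abs{x_v^{\nu_v}}=1-\delta$ (using \eqref{eq:Fsymmetry} to relate the coefficients), reduce \eqref{eq:CentralIdentity} to a coefficient identity matching $G^{\nu}_{\omega,r,\ell}$ against $z_r(\omega,\fraks)$, and prove that identity by Gaussian reciprocity (\cref{prop:reciprocity}) together with the formulas \eqref{Formulae} for the linking form, Rokhlin invariant and quadratic refinement. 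The only cosmetic difference is that you phrase the reciprocity step as converting the $H_1(M,\Z)$-sums into a dual lattice sum, whereas the paper applies it in the opposite direction to the Gauss sum $\calC^{\nu}_{\ell}$ over $\Z^V/r\Z^V$; these are the same identity read backwards.
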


\begin{rem} Our proof of  \cref{prop:GeneratingFunction} is inspired by \cite[Section 3.1]{Costantino-Gukov-Putrov}  and reproduces  some of the  computations from \cite[Section 3.1]{Costantino-Gukov-Putrov}. We strongly emphasize, however, that this involves only the purely algebraic computations  from\cite[Section 3.1]{Costantino-Gukov-Putrov}, which involve no infinite summation or limits, and thus our proof is fully rigorous. 
\end{rem}
The proof of \cref{prop:GeneratingFunction} uses the following version of Gaussian reciprocity.

\begin{prop}[{\cite[Theorem 1]{DT}}] \label{prop:reciprocity}
	Let $ L $ be a lattice of finite rank $ n $ equipped with a non-degenerate symmetric $ \Z $-valued bilinear form $ \sprod{, } $.
	We denote the dual lattice as follows
	\[
	L' \coloneqq \{ y \in L \otimes \R \mid \sprod{x, y} \in \Z \text{ for all } x \in L \}.
	\]
	Let $ 0 < k \in \abs{L'/L} \Z, u \in \frac{1}{k} L $, 
	and $ h \colon L \otimes \R \to L \otimes \R $ be a self-adjoint automorphism such that $ h(L') \subset L' $ and $ \frac{k}{2} \sprod{y, h(y)} \in \Z $ for all $ y \in L' $.
	Let $ \sigma $ be the signature of the symmetric bilinear form $(x,y) \mapsto \sprod{x, h(y)} $.
	Then it holds that
	\begin{equation}
	\sum_{x \in L/kL} \bm{e} \left( \frac{1}{2k} \sprod{x, h(x)} + \sprod{x, u} \right)
	=\frac{\bm{e}(\sigma/8) k^{n/2}}{\sqrt{\abs{L'/L} \abs{\det h}}}
	\sum_{y \in L'/h(L')} \bm{e} \left( -\frac{k}{2} \sprod{y + u, h^{-1}(y + u)} \right).
	\end{equation}
\end{prop}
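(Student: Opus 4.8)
The plan is to prove this reciprocity by Poisson summation applied to a Gaussian-regularized version of the sum, extracting the identity by matching the leading divergences of the two sides as the regularization is removed. Fix a basis of $L$, identifying $L\cong\Z^n$, writing the Gram matrix of $\sprod{,}$ as $G$ and the matrix of $h$ as $H$; self-adjointness of $h$ means $A:=GH$ is symmetric, and I set $\beta:=Gu$, so that $\sprod{x,u}=\transpose{x}\beta$ and $\frac{1}{2k}\sprod{x,h(x)}=\frac{1}{2k}\transpose{x}Ax$. Two numerology facts will be used throughout: $\abs{L'/L}=\abs{\det G}$ and $[L':h(L')]=\abs{\det h}$, so that the target prefactor equals $\bm{e}(\sigma/8)\,k^{n/2}/\sqrt{\abs{\det A}}$, with $\sigma$ the signature of $A$. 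Before anything else I would record the well-definedness of both summands: replacing $x$ by $x+kw$ (with $w\in L$) changes the left phase by $\sprod{w,h(x)}+\frac{k}{2}\sprod{w,h(w)}+k\sprod{w,u}$, each term an integer by the three hypotheses $h(L')\subset L'$, $\frac{k}{2}\sprod{y,h(y)}\in\Z$, and $u\in\frac1k L$; the analogous computation on $L'$, using in addition $k\in\abs{L'/L}\Z$, shows the right summand is constant on cosets of $h(L')$.

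Next I would introduce, for $\epsilon>0$, the absolutely convergent lattice sum $\Phi(\epsilon):=\sum_{x\in\Z^n}\bm{e}\!\left(\frac{1}{2k}\transpose{x}Ax+\transpose{x}\beta\right)e^{-\epsilon\abs{x}^2}$. Grouping $x=c+kw$ over $c\in\Z^n/k\Z^n$ and $w\in\Z^n$ and using the periodicity established above, each coset contributes the left-hand Gauss-sum summand times $\sum_{w}e^{-\epsilon\abs{c+kw}^2}$; since the latter is a Riemann sum converging to $k^{-n}(\pi/\epsilon)^{n/2}$ uniformly in $c$, one gets $\Phi(\epsilon)\sim k^{-n}(\pi/\epsilon)^{n/2}\,S$, where $S$ is the left side of the proposition. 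On the other hand Poisson summation gives $\Phi(\epsilon)=\sum_{\xi\in\Z^n}\widehat f_\epsilon(\xi)$; diagonalizing $A$ reduces $\widehat f_\epsilon$ to a product of one-dimensional Fresnel integrals, and letting the Gaussian factor carry the convergence I obtain $\widehat f_\epsilon(\xi)=\frac{\bm{e}(\sigma/8)\,k^{n/2}}{\sqrt{\abs{\det A}}}\,\bm{e}\!\left(-\frac{k}{2}\transpose{(\xi-\beta)}A^{-1}(\xi-\beta)\right)$ times a Gaussian damping in $\xi$ that is the source of the remaining $\epsilon$-dependence. The substitution $z=G^{-1}\xi$ turns the sum over $\xi\in\Z^n$ into a sum over $z\in L'=G^{-1}\Z^n$, and the direct computation $\transpose{(\xi-\beta)}A^{-1}(\xi-\beta)=\sprod{z-u,h^{-1}(z-u)}$ (using $GA^{-1}G=GH^{-1}$) reproduces exactly the dual quadratic form, up to the harmless relabeling $y=\pm(z-u)$ absorbing the sign.

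It then remains to take $\epsilon\to0$. Because the dual phase is $h(L')$-periodic, summing it against the broad Gaussian damping over $L'$ produces, to leading order, $\mathrm{covol}(L')^{-1}[L':h(L')]^{-1}(\pi/\epsilon)^{n/2}$ times the finite sum over $z\in L'/h(L')$; matching the $\epsilon^{-n/2}$ coefficients of the two expressions for $\Phi(\epsilon)$ and cancelling the common Gaussian normalization then yields the claimed identity, once the covolumes $\mathrm{covol}(\Z^n)=1$, $\mathrm{covol}(L')=\abs{\det G}^{-1/2}$, the index $[L':h(L')]=\abs{\det h}$, and $\abs{\det A}=\abs{\det G}\,\abs{\det h}$ are fed into the bookkeeping. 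The main obstacle is exactly this constant-tracking: one must justify the interchange of the $\epsilon\to0$ limit with the lattice summations (by dominated convergence, using the polynomial control of the periodic dual phase) and verify that the Fresnel phase, the two covolumes, the index $[L':h(L')]$ and the power of $k$ assemble precisely into $\bm{e}(\sigma/8)\,k^{n/2}/\sqrt{\abs{L'/L}\abs{\det h}}$ with no spurious root of unity or scaling error. A useful consistency check, and an alternative way to pin the overall constant should the direct bookkeeping prove delicate, is to specialize to $u=0$, $h=\mathrm{id}$, where the left side is a classical multivariable quadratic Gauss sum and the constant is fixed by Milgram's signature formula.
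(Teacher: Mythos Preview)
The paper does not supply a proof of this proposition: it is quoted from Deloup--Turaev and invoked as a black box (at the step labelled \eqref{eq:applicationGaussianR} in the proof of the generating-function identity). There is thus no argument in the paper to compare yours against.

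That said, your approach --- Gaussian regularization of the lattice sum, Poisson summation, and matching the leading $\epsilon^{-n/2}$ divergences on the two sides --- is the standard route to such reciprocity formulas and is essentially how the cited source proceeds. The skeleton is sound: your well-definedness checks for both summands are correct, the Fresnel prefactor $\bm{e}(\sigma/8)\,k^{n/2}/\sqrt{\abs{\det A}}$ you extract from $\widehat f_\epsilon$ is right, and the substitution $z=G^{-1}\xi$ does convert the dual quadratic phase into $\sprod{z-u,h^{-1}(z-u)}$ as claimed.

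One caution on the bookkeeping you yourself flag as delicate. You write $\mathrm{covol}(L')=\abs{\det G}^{-1/2}$, which is the covolume relative to the Riemannian volume of $\sprod{,}$; but your regularizer $e^{-\epsilon\abs{x}^2}$ and the Riemann-sum approximations use the standard Lebesgue measure in the chosen basis, for which $\mathrm{covol}(G^{-1}\Z^n)=\abs{\det G}^{-1}$. Likewise, the inner Gaussian on the dual side is not simply $(\pi/\epsilon)^{n/2}$: after Poisson the damping in $\xi$ is $\exp\bigl(-k^2\epsilon\,\transpose{(\xi-\beta)}A^{-2}(\xi-\beta)\bigr)$, whose integral over $\R^n$ is $(\pi/\epsilon)^{n/2}\abs{\det A}\,k^{-n}$. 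Keeping one measure throughout makes the constants collapse; mixing conventions introduces a spurious power of $\sqrt{\abs{L'/L}}$. In the paper's own application the lattice is $\Z^V$ with the standard inner product, so $\abs{L'/L}=1$ and this subtlety is invisible there.
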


\begin{proof}[Proof of $ \cref{prop:GeneratingFunction} $]
First notice that for all $\nu \in \mu_2^V$ we have that
\begin{align} 
\label{def:G} G_{\omega,r}(x) =&\lambda_{\nu} \sum_{\alpha \in \frac{1}{2}(\tilde{\omega} + r e) + \Z^V / r \Z^V}
\exp\left( \frac{-2\pi i Q(\alpha)}{r} \right)\prod_{v \in V} F_v\left(\exp\left(\nu_v\left(\frac{2\pi i\alpha_v}{r} +x_v\right)\right)\right).
\end{align}	Using this, one readily finds that equation \eqref{eq:Gexpansion} holds.

	 We now turn to  \eqref{eq:CentralIdentity}. Let $\fraks \in (\deg(v))_{v\in V}+ 2\Z^V /2B (\Z^V)$. Write $\widehat{Z}(M,\fraks;q)=\widehat{Z}_{\fraks}(q)$. Recall the notation $Q$ for the quadratic form associated with $-B$. Substituting the expansion \eqref{eq:Fexpansion} into the definition of $\widehat{Z}_{\fraks}(q)$ and using the well-known fact that, when $y=x^{-1}$, we have that $$\oint_{\abs{z}=1-\varepsilon} \frac{d z}{2\pi i z}=\oint_{\abs{y}=1-\varepsilon} \frac{d y}{2\pi i y},$$
	   where it is understood that both the circle in the $\bbC_z$-plane and the circle in the $\bbC_y$-plane are oriented counter-clockwise, we obtain the following identities (where $\delta>0$ is a small positive parameter)
	\begin{align} \begin{split} \label{eq:computation0}
	\widehat{Z}_{\fraks}(q)&=2^{-\abs{V}}q^{\Delta}\sum_{\nu\in \mu_2^V} \prod_{v\in V} \oint_{\abs{x_v^{\nu_v}}=1-\delta} \frac{dx_v}{2\pi i x_v} \sum_{\ell \in \fraks+2B(\Z^V)}q^{Q^{-1}(\ell)/4} x^{\ell} \sum_{w \in (\deg(v))_{v\in V}+2\Z^{V}_{\geq -1}} F^{\nu}_{w} x^{I_{\nu}(w)}
\\&=2^{-\abs{V}}q^{\Delta}\sum_{\substack{\nu \in \mu_2^V,\\w \in (\deg(v))_{v\in V}+2\Z^{V}_{\geq -1}: -I_{\nu}(w) \in \fraks+2B(\Z^V)}} F^{\nu}_{w}  q^{Q^{-1}(I_{\nu}(w))/4}
\\&=2^{-\abs{V}}q^{\Delta} \sum_{\substack{\nu \in \mu_2^V,\\w \in (\deg(v))_{v\in V}+2\Z^{V}_{\geq -1}: -I_{\nu}(w) \in \fraks+2B(\Z^V)}} \left( \prod_{v \in V} \nu_v^{\deg(v)} \right)  F_{w}  q^{Q^{-1}_{\nu}(w)/4},
\end{split} 
	\end{align}
	and where, in the second equality, we used that $Q^{-1}$ is a quadratic form, and, in the third equality we used the identity \eqref{eq:Fsymmetry} and the definition of $Q_{\nu}^{-1}.$ For each $\nu \in \mu_2^V$, define $c_{\nu}\coloneqq\prod_{v \in V} \nu_v^{\deg(v)}.$ By recalling the definition of $\widehat{Z}_r(M,\omega;\tau)$ given in equation \eqref{def:Z_tau} and by writing $q=\bm{e}(\tau+1/r),$ where $\tau \in \mathbb{H}$ is a small paramter, we obtain from \eqref{eq:computation0} the following identity
	\begin{equation} \label{eq:step1}
	\widehat{Z}_r(M,\omega;\tau)=2^{-\abs{V}}\bm{e}\left(\frac{\Delta}{r} \right)\sum_{\substack{\nu \in \mu_2^V,\\s \in \Spin^c(M),\\w \in (\deg(v))_{v\in V}+2\Z^{V}_{\geq -1}:
			\\ -I_{\nu}(w) \in \fraks+2B(\Z^V) }} z_r(\omega,\fraks)  c_{\nu}  F_{w}  \bm{e}\left( \frac{Q^{-1}_{\nu}(w)}{4r}\right)  \bm{e}\left( \frac{\tau Q^{-1}_{\nu}(w)}{4}\right).
	\end{equation} 
	We have a bijection
	\begin{equation}
	(\deg(v))_{v\in V}+2\Z^{V}  \overset{\sim}{\longrightarrow} \bigsqcup_{\fraks \in  (\deg(v))_{v \in V} +2\Z^V /2B (\Z^V)}  \fraks+ 2B(\Z^V),
	\end{equation} 
	and therefore, we always have that $-I_{\omega}(w) \in \fraks+2B(\Z^V)$ for some uniquely determined $\fraks=\fraks(w,\nu) \in \Spin^c(M)$.  It follows that we may rewrite  the identity \eqref{eq:step1} as follows
		\begin{equation} \label{eq:step2}
	\widehat{Z}_r(M,\omega;\tau)=2^{-\abs{V}}\bm{e}\left(\frac{\Delta}{r}\right)\sum_{\substack{\nu \in \mu_2^V,\\w \in (\deg(v))_{v\in V}+2\Z^{V}_{\geq -1}}} z_r(\omega,\fraks(w, \nu)) c_{\nu} F_{w}  \bm{e}\left( \frac{Q^{-1}_{\nu}(w)}{4r}\right)   \bm{e}\left( \frac{\tau Q^{-1}_{\nu}(w)}{4}\right).
	\end{equation} 
	Therefore, to prove \eqref{eq:CentralIdentity}, we must show that for each $\nu \in \mu_2^V$ and each $\ell \in (\deg(v))_{v \in V}+2\Z_{\geq -1}^V$, we have that
	\begin{equation} \label{eq:CentralEquation}
	G^{\nu}_{\omega,r,\ell}=\bm{e}\left(\frac{\Delta}{r} \right) z_r(\omega,\fraks(\el)) \bm{e}\left( \frac{Q^{-1}_{\nu}(\ell)}{4r}\right)  c_{\nu} F_{\ell}.
	\end{equation}
	Proving \eqref{eq:CentralEquation} will be done by an application of Gaussian reciprocity. We have the following integrality fact, which is needed for the application of Gaussian reciprocity
	\begin{equation} \label{eq:integrality}
	B \tilde{\omega} \in 2\Z^V.
	\end{equation}
	Towards proving \eqref{eq:CentralEquation} we follow the cue from  \cite[Equation (3.4)]{Costantino-Gukov-Putrov} and define
	\begin{align}
	&\calC^{\nu}_\ell \coloneqq\sum_{\alpha \in \frac{1}{2}(\tilde{\omega} + r e) + \Z^V / r \Z^V}
	\bm{e}\left( \frac{-Q(\alpha)}{r} \right) \bm{e}\left(\frac{\transpose{\alpha}  \el}{r}\right),
	\end{align}
	 so that $G^{\nu}_{\omega,r,\ell}= \lambda   \calC^{\nu}_{\ell} c_{\nu} F_{\ell},$ where we used $\lambda_{\nu} = \lambda  c_{\mu} $. Comparing with \eqref{eq:CentralEquation}, we see that we must prove that 
	\begin{equation} \label{eq:reduction}
	\lambda  \calC^{\nu}_{\ell}=\bm{e}\left(\frac{\Delta}{r} \right) z_r(\omega,\fraks(\el)) \bm{e}\left( \frac{Q^{-1}_{\nu}(\ell)}{4r}\right).
	\end{equation}

Equation \eqref{eq:reduction} is implicitly proven in \cite{Costantino-Gukov-Putrov}. The proof of equation \eqref{eq:reduction} is completely algebraic, and involves no infinite summation or taking limits, and this part of the computation in \cite{Costantino-Gukov-Putrov} is completely rigorous. For the sake of completeness, we repeat the details here. 

Introduce $u=\tilde{\omega}+re$ and $w=\frac{\el+Bu}{r} \in \R^V$. Define
\begin{equation} \label{def:Ctilde}
\tilde{\calC}^{\nu}_\ell\coloneqq\sum_{m \in \Z^V /2B\Z^V}\bm{e}\left(-r\frac{\transpose{m}B^{-1}m}{4}-r \frac{\transpose{m}B^{-1}w}{2}\right).
\end{equation} The integrality condition \eqref{eq:integrality} implies that the hypothesis necessary for Gaussian reciprocity (Lemma \cref{prop:reciprocity}) holds for the lattice $\Z^V$ with the standard Euclidean inner product, the self-adjoint automorphism associated with $2B$, and the vector $w$. Using this, we obtain
	\begin{align}
	&\calC^{\nu}_\ell =\sum_{n \in \Z^V / r \Z^V} \bm{e}\left(\frac{-Q(n+2^{-1}u)+\transpose{(n+2^{-1}u}) \el}{r}\right)
	\\&=\bm{e}\left( -\frac{Q(u)}{4r}+\frac{\transpose{u} \el}{2r}\right)\sum_{n \in \Z^V / r \Z^V} \bm{e}\left(\frac{\transpose{n}(2B)n}{2r}+\frac{\transpose{n}(\el+Bu)}{r}\right)
	\\ \label{eq:applicationGaussianR} 
	&=\bm{e}\left( -\frac{Q(u)}{4r}+\frac{\transpose{u} \el}{2r}\right)\frac{\bm{e}(- \abs{V}/8) (r/2)^{\abs{V}/2}}{\sqrt{\abs{\det B}}}\sum_{m \in \Z^V /2B\Z^V} \bm{e}\left(-\frac{r}{4}\transpose{(m+w)}B^{-1}(m+w)\right)
	\\&=\bm{e}\left( -\frac{Q(u)}{4r}+\frac{\transpose{u} \el}{2r}\right)\frac{\bm{e}(- \abs{V}/8) (r/2)^{\abs{V}/2}}{\sqrt{\abs{\det B}}}\sum_{m \in \Z^V /2B\Z^V} \bm{e}\left(-r\frac{ \transpose{m}B^{-1}m}{4}-\frac{\transpose{\el}B^{-1}\el}{4r}+\frac{Q(u)}{4r}\right)
	\\ & \times \bm{e}\left(\frac{-r}{2}\left(\transpose{m}B^{-1}\el r^{-1}+\transpose{m} u r^{-1}+\transpose{\el} ur^{-2}\right) \right)
	\\&= \bm{e}\left(\frac{Q^{-1}_{\nu}(\ell)}{4r}\right)\frac{\bm{e}(- \abs{V}/8) (r/2)^{\abs{V}/2}}{\sqrt{\abs{\det B}}}\sum_{m \in \Z^V /2B\Z^V}\bm{e}\left(-r\frac{\transpose{m}B^{-1}m}{4}- \frac{\transpose{m}B^{-1}(\el+Bu)}{2}\right)
	\\&=	\bm{e} \left( \frac{1}{4r} Q_{\nu}^{-1}(\ell)\right)
	\frac{\bm{e}(- \abs{V}/8) (r/2)^{\abs{V}/2}}{\sqrt{\abs{\det B}}}
	\tilde{\calC}^{\nu}_\ell,
	\end{align}
	where we applied \cref{prop:reciprocity} in the third equality. By fixing a set of representatives for $\Z^V/ B\Z^V$ we get a well-defined map $(\Z^V/2 \Z^V)\times (\Z^V /B\Z^V) \rightarrow \Z^V/(2B)\Z^V$ given by $(A,a) \mapsto BA+a \mod 2B\Z^V$. This map is easily seen to be injective, and since the source and target are of the same finite cardinality, it is therefore bijective. Thus we can and will write $m=a+BA, a \in \Z^V /2 \Z^V, A \in \Z^V /B \Z^V$ for unique classes. Further, we can define $ s=s(\el) \in (\Z/2\Z)^V $ and $ b=b(\el) \in \Z^V / B \Z^V $ given by the projections of $ \el \in \Z^V $ in $ \Z^V / 2B\Z^V $ by $ \sigma \colon H_1(M, \Z) \times \Spin(M) \xrightarrow{\sim} \Spin^c(M) $, such that 
	\begin{equation} \label{def:sandb}
	2b(\el) + B s(\el) \equiv \el \bmod 2B \Z^V.
	\end{equation} Substituting $m=BA+a$ and \eqref{def:sandb} into $\tilde{\calC}^{\nu}_\ell$ as defined in equation \eqref{def:Ctilde}, and using the integrality condition $B\tilde{\omega} \in 2\Z^V$ (stated in\eqref{eq:integrality}) and the facts that $re =-re \mod 2\Z^V$ and $\transpose{A}b\in \Z^V$, one obtains the following identity (where se used the notation $s=s(\el)$ and $b=b(\el)$ for notational clarity)
	\begin{equation} \label{eq:intermediate}
\tilde{\calC}^{\nu}_\ell =
\sum_{\substack{a \in \Z^V / B \Z^V, \\ A \in \Z^V/2\Z^V}}
	\bm{e}\left( 
		-\frac{\transpose{a} B^{-1} a + 2 \transpose{A} a + \transpose{A} B A}{4r} 
		- \frac{\transpose{a} (2B^{-1} b +s+\tilde{\omega}-re) + \transpose{A} B (s-re)}{2}  
	\right).
	\end{equation}
To further simplify the right hand side of \eqref{eq:intermediate} and to prove equation \eqref{eq:reduction}, one considers separately the cases of $r \equiv 1 \mod 4, r \equiv 2 \mod 4$ and $r \equiv 3 \mod 4$. We recall some identifications and formulas, which are needed for computing the coefficients $z_r(\omega,\fraks)$. We have 
$$\Spin(M) \cong \left\{ s \in (\Z /2 \Z)^V \mid Bs \equiv (B_{vv})_{v \in V} \bmod 2 \Z^V \right\},$$ and we can write the canonical map 
$ \sigma \colon H_1(M, \Z) \times \Spin(M) \xrightarrow{\sim} \Spin^c(M) $, the $\bmod \ 4$ reduction of the Rokhlin invariant of $s\in \Spin(M)$,  the linking pairing $ \lk \colon H_1(M, \Z) \otimes H_1(M, \Z) \to \Q/\Z $ and its quadratic refinement $ q_s \colon H_1(M, \Z) \to \Q/2\Z  $ as follows
\begin{align}  \label{Formulae}\begin{split}  
& \sigma(b, s) = 2b + Bs,
\\  &\mu(M, s) = -\abs{V} - \transpose{s}Bs \in \Z / 4 \Z,
\\ & \lk(a, b) = \transpose{a} B^{-1} b \bmod \Z, 
\\& q_s (a) = \transpose{a} B^{-1} a + \transpose{a} s \bmod 2\Z.
\end{split} 
\end{align} By the calculation in \cite[Subsection 3.2--3.4]{Costantino-Gukov-Putrov}, we have
	\begin{align}	
	\tilde{\calC}^{\nu}_\ell =
	\begin{dcases}
	\begin{aligned}
	&\frac{\bm{e}(\abs{V}/8) 2^{\abs{V}/2}}{\sqrt{\abs{\det B}}}
	\bm{e} \left( \frac{1}{4} \transpose{s(\el)} B s(\el) - \frac{1}{4} \tr B + \frac{ \abs{V} - 1}{2} \right)
	\\
	& \sum_{a, f \in \Z^V/B\Z^V}
	\bm{e} \left( -\frac{r-1}{4} \transpose{a} B^{-1} a + \transpose{a} B^{-1}(f - b(\el)) - \frac{1}{2} \transpose{a} \tilde{\omega} + \transpose{f} B^{-1} f  \right)
	\end{aligned}
	& \text{ if } r \equiv 1 \bmod 4, \\
	\begin{aligned}
	2^{\abs{V}} \sum_{{a}\in \Z^V /B\Z^{V}}
	\bm{e} \left( -\frac{r}{4} \transpose{a} B^{-1} a - 2 \transpose{a} B^{-1} b(\el) - \frac{1}{2} \transpose{a} (s(\el)+\tilde{\omega}) \right)
	\end{aligned}
	& \text{ if } r \equiv 2 \bmod 4, \\
	\begin{aligned}
	&\frac{\bm{e}( -\abs{V}/8) 2^{\abs{V}/2}}{\sqrt{\abs{\det B}}}
	\bm{e} \left( -\frac{1}{4} \transpose{s(\el)} B s(\el) + \frac{1}{4} \tr B - \frac{ \abs{V} - 1}{2} \right)
	\\
	& \sum_{a, f \in \Z^V/B\Z^V}
	\bm{e} \left( -\frac{r+1}{4} \transpose{a} B^{-1} a - \transpose{a} B^{-1} (f + b(\el)) - \frac{1}{2} \transpose{a} \tilde{\omega} - \transpose{f} B^{-1} f \right)
	\end{aligned}
	& \text{ if } r \equiv 3 \bmod 4.
	\end{dcases}
	\end{align}
	Thus, by using the definition of the constants $z_r(\omega,\fraks)$ given in equation \eqref{coeffsummary} and by using the formulae  for the canonoical pairing, the Rokhlin invariant and the linking pairing and the spin refined quadratic form presented in \eqref{Formulae} above, we obtain the following identity
	\begin{align}	
	\lambda \calC^{\nu}_\ell = \,
	&\frac{\calT (M, [\omega])}{\abs{\det B}}
	\bm{e} \left( - \frac{1}{4r} \left( 3 \abs{V} + \tr B + \transpose{\el}B^{-1} \el\right) \right)
	\\
	&
	\begin{dcases}
	\begin{aligned}
	&\bm{e} \left( \frac{\abs{V} + \transpose{s(\el)} B s(\el)}{4} - \frac{1}{2} \right)
	\\
	& \sum_{a, f \in \Z^V/B\Z^V}
	\bm{e} \left( -\frac{r-1}{4} \transpose{a} B^{-1} a + \transpose{a} B^{-1}(f - b(\el)) - \frac{1}{2} \transpose{a} \tilde{\omega} + \transpose{f} B^{-1} f  \right)
	\end{aligned}
	& \text{ if } r \equiv 1 \bmod 4, \\
	\begin{aligned}
	\sqrt{\abs{\det B}}
	\sum_{{a}\in \Z^V /B\Z^{V}}
	\bm{e} \left( -\frac{r}{4} \transpose{a} B^{-1} a - 2 \transpose{a} B^{-1} b(\el) - \frac{1}{2} \transpose{a} (s(\el)+\tilde{\omega}) \right)
	\end{aligned}
	& \text{ if } r \equiv 2 \bmod 4, \\
	\begin{aligned}
	&\bm{e} \left( -\frac{\abs{V} + \transpose{s(\el)} B s(\ell)}{4} + \frac{1}{2} \right)
	\\
	& \sum_{a, f \in \Z^V/B\Z^V}
	\bm{e} \left( -\frac{r+1}{4} \transpose{a} B^{-1} a - \transpose{a} B^{-1} (f + b(\el)) - \frac{1}{2} \transpose{a} \tilde{\omega} - \transpose{f} B^{-1} f \right)
	\end{aligned}
	& \text{ if } r \equiv 3 \bmod 4
	\end{dcases}
	\\
	= \,
	&\frac{\calT (M, [\omega])}{\abs{H_1(M, \Z)}}
	\bm{e} \left( - \frac{1}{4r} \left( 3 \abs{V} + \tr B + \transpose{\el}B^{-1} \el \right) \right)
	\\
	&
	\begin{dcases}
	\begin{aligned}
	&-\bm{e}(-\mu(M, s(\el))/4)
	\\
	&
	\sum_{a, f \in H_1(M, \Z)}
	\bm{e} \left( -\frac{r-1}{4}\lk(a,a) +\lk(a,f-b) - \frac{1}{2}\omega(a) +\lk(f,f) \right)
	\end{aligned}
	&\text{ if } r=1\bmod 4,
	\\
	\sqrt{|H_1(M;\Z)|} \sum_{a \in H_1(M, \Z)}
	\bm{e} \left( -\frac{r}{4} q_s(a) - \lk(a,b) - \frac{1}{2} \omega(a) \right),
	&\text{ if } r=2\bmod 4,
	\\
	\begin{aligned}
	&-\bm{e}(\mu(M, s(\el))/4)
	\\
	&
	\sum_{a, f \in H_1(M, \Z)}
	\bm{e} \left( -\frac{r+1}{4}\lk(a,a) -\lk(a,f+b)-\frac{1}{2}\omega(a) -\lk(f,f) \right)
	\end{aligned}
	&\text{ if } r=3\bmod 4,
	\end{dcases}
	\\
	= \,
	&z_r(\omega,\sigma(b(\el), s(\el)))
	\bm{e} \left( - \frac{1}{4r} \left( 3 \abs{V} + \tr B + \transpose{\el}B^{-1} \el\right) \right)
	\\
	= \,
	&z_r(\omega, \fraks(\el))
	\bm{e} \left(\frac{\Delta}{r} + \frac{Q^{-1}_{\nu}(\ell)}{4r} \right).
	\end{align}
Thus we obtained \eqref{eq:reduction}, and this concludes the proof.
\end{proof}





\subsection{Proof of The Integral Representation} \label{sec:integral} We now turn to the proof of \cref{thm:integral}. The proof of this theorem is based on \cref{prop:GeneratingFunction}, the following well-known exact formula for Gaussian integrals recalled in the lemma below, and the lemma of stationary phase approximation (\cref{lem:StationaryPhase}).
\begin{lem} Let $B'\in M_{m\times m}(\bbC)$ be a symmetric and non-degenerate $m\times m$ matrix with positive definite imaginary part, and let $w \in \bbC^{m}$. We have that
\begin{equation} \label{eq:Gaussian}
\int_{\R^m} \exp\left(\frac{i}{2} \transpose{x}B'x +i \transpose{w}x\right) d^m x= \sqrt{\frac{(2\pi i)^m}{\det(B')}} \exp\left(-\frac{i}{2} \transpose{w}(B')^{-1}w \right).
\end{equation}
\end{lem}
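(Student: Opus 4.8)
The plan is to establish \eqref{eq:Gaussian} in three moves: complete the square to remove $w$, diagonalise over $\R$ to reduce to a product of one–dimensional integrals, and evaluate the resulting Fresnel integrals by analytic continuation, while fixing the branch of the square root by continuity. (This is a standard identity, close to Theorem 7.6.1 in \cite{Hormander83}, but I spell out the argument for completeness.)

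\emph{Step 1 (removing $w$).} Write $B'=P+iR$ with $P,R$ real symmetric and $R>0$; since $\Im B'>0$ the matrix $B'$ is invertible. The algebraic identity $\tfrac{i}{2}\transpose{x}B'x+i\transpose{w}x=\tfrac{i}{2}\transpose{(x+(B')^{-1}w)}B'(x+(B')^{-1}w)-\tfrac{i}{2}\transpose{w}(B')^{-1}w$ together with the substitution $x\mapsto x-(B')^{-1}w$ reduces the integral over $\R^m$ to one over the translated real subspace $\R^m-i\Im((B')^{-1}w)$. For $z=x+iy$ with $y$ in a fixed bounded set one computes $\abs{\exp(\tfrac{i}{2}\transpose{z}B'z)}=\exp\bigl(-\tfrac12\transpose{x}Rx-\transpose{x}Py+\tfrac12\transpose{y}Ry\bigr)$, which decays like a Gaussian in $x$ uniformly in $y$; hence Cauchy's theorem, applied one coordinate at a time via Fubini and this uniform decay, moves the contour back to $\R^m$. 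It therefore suffices to prove $\int_{\R^m}\exp(\tfrac{i}{2}\transpose{x}B'x)\,d^mx=\sqrt{(2\pi i)^m/\det B'}$.

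\emph{Step 2 (diagonalisation).} Write $R=\transpose{S}S$ with $S$ real and invertible (Cholesky), and substitute $x=S^{-1}y$; the Jacobian equals $(\det R)^{-1/2}$, and $B'$ is conjugated to $\transpose{(S^{-1})}B'S^{-1}=\widetilde P+iI_m$ with $\widetilde P\coloneqq\transpose{(S^{-1})}PS^{-1}$ real symmetric. Diagonalising $\widetilde P=\transpose{O}\Lambda O$ with $O$ real orthogonal and $\Lambda=\diag(\lambda_1,\dots,\lambda_m)$, $\lambda_j\in\R$, and substituting $y=\transpose{O}z$ (Jacobian $1$), the integral factorises as $(\det R)^{-1/2}\prod_{j=1}^{m}\int_{\R}\exp\bigl(\tfrac{i}{2}(\lambda_j+i)t^2\bigr)\,dt$.

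\emph{Step 3 (the one–dimensional integral and the branch).} For $a$ in the open right half-plane, $\int_{\R}\exp(-\tfrac12 at^2)\,dt=\sqrt{2\pi/a}$ with the principal branch: both sides are holomorphic in $a$ there (the left-hand side by differentiation under the integral sign), and they agree for $a\in(0,\infty)$. Applying this with $a=1-i\lambda_j=-i(\lambda_j+i)$ gives $\int_{\R}\exp\bigl(\tfrac{i}{2}(\lambda_j+i)t^2\bigr)\,dt=\sqrt{2\pi/(1-i\lambda_j)}=\sqrt{2\pi i/(\lambda_j+i)}$. Multiplying over $j$ and using $\det B'=\det(R)\prod_{j}(\lambda_j+i)$ shows that the Step~1 integral equals a continuous square root of $(2\pi i)^m/\det B'$ on the domain $\mathcal{D}_m\coloneqq\{B'\in M_{m\times m}(\bbC):B'=\transpose{B'},\ \Im B'>0\}$. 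This domain is convex, hence connected, so a continuous square root of the nonvanishing holomorphic function $(2\pi i)^m/\det B'$ is determined by a single value; evaluating at $B'=iI_m$ gives $\int_{\R^m}\exp(-\tfrac12\abs{x}^2)\,d^mx=(2\pi)^{m/2}>0$, which is precisely the normalisation implicit in the statement. This pins the branch and finishes the proof.

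\emph{Expected obstacle.} The only genuinely delicate point is the bookkeeping of branches of the square root in Step~3; accordingly I would state the lemma with the convention that $\sqrt{(2\pi i)^m/\det B'}$ denotes the continuous (equivalently holomorphic) branch on $\mathcal{D}_m$ normalised by its value at $B'=iI_m$, which turns the matching into the routine connectedness argument above.
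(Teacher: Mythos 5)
Your proof is correct. The paper itself gives no argument for this lemma --- it is recorded as a ``well-known exact formula for Gaussian integrals'' and used as a black box --- so there is nothing to compare against; your argument (complete the square and shift the contour back using the Gaussian decay, reduce to one-dimensional Fresnel integrals via Cholesky and orthogonal diagonalisation, then fix the branch of $\sqrt{(2\pi i)^m/\det B'}$ by continuity on the convex domain $\{\Im B'>0\}$ and the normalisation at $B'=iI_m$) is the standard one, essentially Theorem 7.6.1 of H\"ormander, and all the delicate points (the uniform decay justifying the contour shift, and the branch bookkeeping in Step 3) are handled correctly.
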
 
\begin{lem}[{\cite[Chapter 7, Lemma 7.7.3]{Hormander83}}] \label{lem:StationaryPhase}
	Let $m$ be a positive integer. Let $B' \in M_{m\times m}(\bbC)$ be a symmetric non-degenerate $m\times m$ matrix with semi-positive definite real part. Let $G\in C^{\infty}(\R^m;\bbC)$ be a smooth function. For each $j \in \{1,...,m\}$ define the differential operator $D_j\coloneqq-i \frac{\partial}{\partial x_j}$, and for every $C=(c_{i,j})_{1\leq i,j \leq m} \in M_{m\times m}(\bbC)$, define the differential operator $\langle C D, D \rangle\coloneqq \sum_{i,j} c_{i,j}D_i D_j$. For every sector $S\subset \mathbb{H}$ the following Poincare asymptotic expansion holds as $\tau \in S $ tends to $0$, provided the integral on the left hand side is convergent
	\begin{equation} \label{eq:stationaryphase}
	\sqrt{\det\left(\frac{-B'}{2\pi^2 \tau i}\right)}
	\int_{\R^m} \exp\left( \frac{\transpose{x}B'x}{2\pi i \tau }\right) G(x) d x \sim \sum_{l=0}^{\infty} (2\pi i \tau)^l   \frac{\langle {B'}^{-1}D, D \rangle^l(G)}{4^l l!}(0).
	\end{equation} 
\end{lem}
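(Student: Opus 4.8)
The plan is to give the classical Fourier-transform proof of stationary phase for a purely quadratic phase, specialized so that the determinantal prefactor in \eqref{eq:stationaryphase} is exactly the one produced by the Gaussian Fourier transform. Throughout I write $h\coloneqq 2\pi i\tau$ for the small parameter and $\widehat{G}(\xi)\coloneqq\int_{\R^m}e^{-i\transpose{x}\xi}G(x)\,dx$, with $D_j=-i\,\partial/\partial x_j$, so that multiplication by $\transpose{\xi}C\xi$ on the Fourier side corresponds to the operator $\langle CD,D\rangle$ on $G$. First I would localize at the stationary point $0$: choose $\chi\in C_0^\infty(\R^m)$ with $\chi\equiv1$ near $0$ and split $G=\chi G+(1-\chi)G$. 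On the support of $1-\chi$ the gradient $\nabla(\transpose{x}B'x)=2B'x$ is bounded away from $0$ (here $B'$ is nondegenerate), so the first-order operator $Lf\coloneqq\frac{h}{|2B'x|^2}\sum_j\overline{(2B'x)_j}\,\partial_j f$ satisfies $L\,e^{\transpose{x}B'x/h}=e^{\transpose{x}B'x/h}$; integrating by parts $N$ times and using the assumed convergence together with the decay furnished by $\Re B'\ge0$ shows the tail $\int(1-\chi)G\,e^{\transpose{x}B'x/h}\,dx$ is $O(h^N)$ for every $N$. Since $\chi\equiv1$ near $0$, the derivatives of $\chi G$ at $0$ equal those of $G$, so it remains to treat the compactly supported (hence Schwartz) function $\chi G$, which I rename $G$.

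The core step applies the multiplication formula $\int\widehat{f}\,g=\int f\,\widehat{g}$ to the Gaussian $K(x)=e^{\transpose{x}B'x/h}=e^{-\frac12\transpose{x}Mx}$ with $M=iB'/(\pi\tau)$. Using the exact Gaussian Fourier transform (a complex-parameter version of \eqref{eq:Gaussian}), $\widehat{K}(\xi)=(2\pi)^{m/2}(\det M)^{-1/2}e^{-\frac12\transpose{\xi}M^{-1}\xi}$ with $M^{-1}=-i\pi\tau\,{B'}^{-1}$, a short determinant bookkeeping shows that the prefactor $\sqrt{\det(-B'/(2\pi^2 i\tau))}$ cancels every constant except $(2\pi)^{-m}$, giving
\begin{equation*}
\sqrt{\det\left(\frac{-B'}{2\pi^2\tau i}\right)}\int_{\R^m}e^{\transpose{x}B'x/h}G(x)\,dx=(2\pi)^{-m}\int_{\R^m}\exp\left(\frac{i\pi\tau}{2}\transpose{\xi}{B'}^{-1}\xi\right)\widehat{G}(\xi)\,d\xi.
\end{equation*}
As $\tau\to0$ the dual Gaussian tends to $1$, so Taylor-expanding it produces, for its $l$-th term, the factor $\frac{1}{l!}\left(\frac{i\pi\tau}{2}\right)^l(2\pi)^{-m}\int_{\R^m}(\transpose{\xi}{B'}^{-1}\xi)^l\widehat{G}(\xi)\,d\xi$. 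The moment identity $\int_{\R^m}(\transpose{\xi}C\xi)^l\widehat{G}(\xi)\,d\xi=(2\pi)^m\langle CD,D\rangle^lG(0)$, immediate from Fourier inversion and $D=-i\partial$, together with $(i\pi\tau/2)^l=(2\pi i\tau)^l/4^l$, turns this into exactly $(2\pi i\tau)^l\langle{B'}^{-1}D,D\rangle^lG(0)/(4^l\,l!)$, the $l$-th summand on the right of \eqref{eq:stationaryphase}.

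It remains to estimate the Taylor remainder uniformly on the closed sector $S$. Writing $\exp(\cdot)$ as its $N$-term polynomial plus the standard integral remainder and inserting it above, the error is bounded by a constant times $|\tau|^N\int_{\R^m}\lvert\transpose{\xi}{B'}^{-1}\xi\rvert^N\bigl\lvert\exp(\tfrac{i\pi\tau}{2}\transpose{\xi}{B'}^{-1}\xi)\bigr\rvert\,\lvert\widehat{G}(\xi)\rvert\,d\xi$, which, in the case where $\Re\bigl(\tfrac{i\pi\tau}{2}\transpose{\xi}{B'}^{-1}\xi\bigr)\le0$, is bounded by $|\tau|^N\int_{\R^m}\lvert\transpose{\xi}{B'}^{-1}\xi\rvert^N\lvert\widehat{G}(\xi)\rvert\,d\xi=O(|\tau|^N)$, giving the Poincaré bound with the correct power of $\tau$. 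The \emph{main obstacle} is precisely this convergence and its uniformity over $S$: the dual Gaussian decays in $\xi$ only when $\Im(iB'/(\pi\tau))\ge0$, which need not follow from $\Re B'\ge0$ alone. When it does hold — as in the application of \cref{lem:StationaryPhase}, where $B'=-B$ is real positive definite and $\tau\in\mathbb{H}$ forces genuine Gaussian decay of both the original and the dual integrand — the estimates are immediate and uniform on $S$. In the general convergent case I would restore decay by rotating the contour $\R^m$ into $\bbC^m$ along a ray chosen compatibly with $\arg\tau$, or equivalently invoke analyticity (both sides are holomorphic in the entries of $B'$ on the open connected set where the integral converges, and they agree on the definite locus), thereby reducing to the decaying situation and securing the uniform remainder bound on all of $S$.
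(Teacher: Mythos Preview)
Your argument is essentially correct and takes a different route from the paper. You give a self-contained Fourier-transform proof---localize, apply the Gaussian Fourier transform via the multiplication formula, Taylor-expand the dual Gaussian, and identify each coefficient as a Fourier moment---which is in fact the method behind H\"ormander's Lemma~7.7.3 itself. The paper, by contrast, does not reprove anything: it absorbs the complex parameter into the matrix by writing $\tau=|\tau|e^{i\theta}$, setting $A\coloneqq|\tau|\tau^{*}B'=-e^{-i\theta}B'$ and $\omega\coloneqq(\pi|\tau|)^{-1}$, so that $\transpose{x}B'x/(2\pi i\tau)=i\omega\,\transpose{x}Ax/2$ with a \emph{real} large parameter $\omega\to\infty$, and then quotes H\"ormander's explicit remainder bound, observing that it is uniform once the unit-modulus factor $-e^{-i\theta}$ ranges over a compact arc of $U(1)$---precisely what confining $\tau$ to a sector accomplishes. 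Your approach has the merit of making the origin of the coefficients $\langle{B'}^{-1}D,D\rangle^{l}G(0)/(4^{l}l!)$ completely transparent; the paper's reduction is much shorter and sidesteps your ``main obstacle'' (the sign of $\Re(\tfrac{i\pi\tau}{2}\transpose{\xi}{B'}^{-1}\xi)$) by delegating all analysis to the cited source. Your treatment of that obstacle is honest: in the only case the paper actually needs ($B'=-B$ real positive definite), both your tail estimate and the dual-Gaussian bound are immediate and uniform on $S$, so there is no gap for the application. Your fallback for the general case---analytic continuation in the entries of $B'$---would need more care, since an asymptotic expansion is a family of estimates rather than an identity and does not propagate by analyticity alone; the contour-rotation alternative is the right idea, and is morally the same move as the paper's absorption of the phase into $A$.
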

We briefly explain how to get \cref{lem:StationaryPhase} from \cite[Chapter 7, Lemma 7.7.3]{Hormander83}. Recall the involution of $\mathbb{H}$ given by $\tau^* \coloneqq -\tau^{-1}$. Then $\abs{\tau}\tau^* \in U(1)$ is a phase factor, which ranges over a compact set as $\tau$ ranges over a sector $S$. Now, to align with the notation of \cite[Chapter 7, Lemma 7.7.3]{Hormander83}, we define a positive paramter $\omega\coloneqq (\pi\abs{\tau})^{-1}$ and we define a non-degenerate symmetric matrix with positive-definite imaginary part $A \coloneqq \abs{\tau}\tau^* B'$.  With this notation, we have that
\begin{equation} \frac{\transpose{x}B'x}{2\pi i \tau }=\frac{i}{\pi\abs{\tau} }\frac{\transpose{x}(\abs{\tau}\tau^* B')x}{2}=i \omega \frac{\transpose{x}Ax}{2},
\end{equation} and one can apply \cite[Chapter 7, Lemma 7.7.3]{Hormander83} directly to obtain the content of \cref{lem:StationaryPhase}. Note that the explicit bound for the error term given \cite[Chapter 7, Lemma 7.7.3]{Hormander83} implies that this can be bounded uniformly when $\abs{\tau}\tau^* \in U(1)$ range over a compact subset of $U(1)$, which is the case when $\tau$ is confined to a sector.

\begin{proof}[Proof of $ \cref{thm:integral} $]

Recall the notation $\widehat{Z}^{\mu}_{\omega,r}(\tau)$ introduced in \eqref{eq:Zhatmu}. Let $m=\abs{V}$, and identify $\bbC^V \cong \bbC^m$. Consider the meromorphic $(m,0)$-form  $\Omega_{\tau}$ on $\bbC^m$ given by \eqref{eq:integrand} and the $\ell$-dependent holomorphic $(m,0)$-forms $\Omega_{\tau,\ell}$ on $\bbC^m$ given by \eqref{eq:integrandell}
 \begin{align} 
\label{eq:integrand}
&\Omega_{\tau}(x) \coloneqq \exp\left(\frac{Q(x)}{2\pi i \tau} \right) G_{\omega,r}(ix) d^m x,
\\& \label{eq:integrandell} \Omega^{\nu}_{\tau,\ell}(x) \coloneqq G^{\nu}_{\omega,r,\ell}\exp\left(\frac{Q(x)}{2\pi i \tau}  +i \transpose{\el}x\right) d^m x.
\end{align} Set $c_{\tau}\coloneqq \sqrt{\frac{\det(B)}{(2\pi^2 \tau i)^{m}}}$. We will start by proving that
	\begin{equation} \label{eq:identitymu}
	\widehat{Z}^{\nu}_{\omega,r}(\tau)
	= 
	c_{\tau}
	\int_{\Upsilon_{\nu}} \Omega_{\tau}.
	\end{equation}
	 By\eqref{eq:CentralIdentity}, this will imply that \eqref{eq:integraprep} holds. Taking $B'=B_{\tau}\coloneqq\frac{1}{\pi \tau} B$, and applying \eqref{eq:Gaussian} to
	\eqref{eq:CentralIdentity} we obtain
	\begin{align} \begin{split} \label{integralexpression1} 
	\widehat{Z}^{\nu}_{\omega,r}(\tau)
	&= \sum_{\ell} G^{\nu}_{\omega,r,\ell}\exp\left(-2\pi i \tau \frac{\transpose{\el}B^{-1}\el}{4} \right) =\sum_{\ell} G^{\nu}_{\omega,r,\ell}\exp\left(- i \frac{\transpose{\el}B_{\tau}^{-1}\el}{2} \right) 
	\\&= c_{\tau}\sum_{\ell} G^{\nu}_{\omega,r,\ell}\int_{\R^m} \exp\left(\frac{Q(x)}{2\pi i \tau}  +i \transpose{\el}x\right) d^m x= c_{\tau}\sum_{\ell} \int_{\R^m} \Omega^{\nu}_{\tau,\ell}.
	\end{split} 
	\end{align}
Recall that by definition $\Upsilon_{\nu}=i\varepsilon\nu+\R^m,$
where $\varepsilon$ is a small positive parameter. Note that we may choose $\varepsilon$ so small that the quantity \eqref{def:varepsilon} is sufficiently small for all $x \in \Upsilon_{\nu}$, which implies that the following sum converges absolultely and locally uniformly on $\Upsilon_{\nu}$
\begin{equation}
\label{sumell} 
\sum_{\ell} \Omega^{\nu}_{\tau,\ell}(x)= \Omega_{\tau}(x), \quad \forall x \in \Upsilon_{\nu}.
\end{equation}

 We will now apply Stokes theorem to deform $\R^m$ to $\Upsilon_{\nu}$, and on this contour we can obtain \eqref{eq:mu} by interchanging integration and summation and using \eqref{eq:Gexpansion}. We now give the the details. Identify $\bbC^m\cong \R^m \times i \R^m$, and for each positive real number $R>0$, consider the compact oriented manifold with corners
\begin{equation}
A_R\coloneqq B^m_R\times i[0,\varepsilon \nu] \subset \R^{m} \times i \R^{m}, 
\end{equation}
where $B^m_R\subset \R^m$ is the $m$-dimensional ball of radius $R$ and centered at the origin, and $[0,\varepsilon\nu]$ denotes the straight line in $\R^m$ from the origin to $\varepsilon \nu$. Let $A_R^{\leq 1} \subset A_R$  denote the complement of the corner points. Denote by $S_R^{m-1}\coloneqq\partial B_R^m$ the sphere which bounds $B_R^m$. This is a holomorphic $(m,0)$-form on $\bbC^m$,  so in particular it is closed, i.e.~$d\Omega^{\nu}_{\tau,\ell}=0.$ By Stokes theorem for oriented manifolds with corners we have that $\int_{\partial (A_R^{\leq 1})} \Omega^{\nu}_{\tau,\ell}=\int_{A_R}  d \Omega^{\nu}_{\tau,\ell}=\int_{A_R} 0=0$, and as the boundary of $A_R^{\leq 1}$ decompose as follows (with the Stokes orientation) \begin{equation}
\partial (A_R^{\leq 1})= \text{Int}(B_R^m)\cup (\text{Int}(B_R^m)+i\varepsilon\nu)\cup \left( S_R^{m-1}\times i(0,\varepsilon\nu) \right),
\end{equation}
we obtain the identity
\begin{align} \begin{split} &\label{vanish} \int_{\R^m} \Omega^{\nu}_{\tau,\ell}= \lim_{R \rightarrow \infty} \int_{B_R^m} \Omega^{\nu}_{\tau,\ell}(x) =  \lim_{R \rightarrow \infty} \left[ \int_{i\varepsilon\nu+B_R^m} \Omega^{\nu}_{\tau,\ell}(x)  +\int_{S^{m-1}_R \times i [0,\varepsilon\nu_j]} \Omega^{\nu}_{\tau,\ell}(x)  \right]
	\\& =\lim_{R \rightarrow \infty} \int_{i\varepsilon\nu+B_R^m} \Omega^{\nu}_{\tau,\ell}(x) =\int_{\Upsilon_{\nu}} \Omega^{\nu}_{\tau,\ell}, \end{split}
	\end{align}
In the second to last equation of \eqref{vanish}, we used the obvious fact that
\begin{equation} \lim_{R \rightarrow \infty} \int_{S_R^{m-1} \times i[0,\varepsilon\nu]} \Omega^{\nu}_{\tau,\ell}(x)=0,
\end{equation}
which easily is seen to follow from the fact that
\begin{equation}
\text{Vol}(S_R^{m-1} \times i[0,\varepsilon\nu])  \sup \left\{ \abs{\Omega^{\nu}_{\tau,\ell}(x)}: x \in S_R^{m-1} \times i[0,\varepsilon\nu] \right\} =\mathcal{O}\left(R^{m-1}e^{\frac{R^2}{\Re(2\pi i \tau)} \inf\left\{ Q(u') : u' \in S_1^{m-1} \right\} }\right),
\end{equation}
and the right hand side of this is exponentially decaying. On the contour $\Upsilon_{\nu}$ the quantity $\varepsilon(x,\nu)$ defined in \eqref{def:varepsilon} is sufficiently small, and therefore the expansion \eqref{eq:Gexpansion} holds. As the function $x \mapsto \exp\left(\frac{Q(x)}{2\pi i \tau}\right)G_{\omega,r}(ix)$ is absolutely integrable over $\Upsilon_{\nu}$, we may therefore apply standard theorems of analysis to interchange summation and integration in order to obtain from \eqref{vanish}, \eqref{integralexpression1} and \eqref{sumell}  the desired identity
\begin{align} \label{eq:mu}
\widehat{Z}^{\nu}_{\omega,r}(\tau)
		= c_{\tau}\sum_{\ell}\int_{\Upsilon_{\nu}}  \Omega^{\nu}_{\tau,\ell}=c_{\tau}\int_{\Upsilon_{\nu}} \sum_{\ell}\Omega^{\nu}_{\tau,\ell}=c_{\tau}\int_{\Upsilon_{\nu}} \Omega_{\tau}.
	\end{align} 
Thus we have established \eqref{eq:identitymu}.

We now turn to the proof of the asymptotic expansion \eqref{eq:expansion}, which uses the method of steepest descent \cite{Fedoryuk}. Since $\Omega_{\tau}$ is meromorphic,  it is in particular a closed holomorphic $(m,0)$-form on the complement of its pole divisor $\mathcal{P}_{\omega,r}$, which is identified in equation \eqref{def:P}. We define an allowable deformation to be a smooth one-parameter family of $m$-dimensional contours $\Upsilon_{\nu}(t), t\in [0,1]$, with $\Upsilon_{\nu}(0)=\Upsilon_{\nu}$ and $\Upsilon_{\nu}'\coloneqq\Upsilon_{\nu}(1)$,  which satisfies for all $t \in [0,1]$ that $\Upsilon_{\nu}(t) \subset \bbC^m \setminus \mathcal{P}_{\omega,r}, $ and $\Upsilon_{\nu}(t) \setminus K= \Upsilon_{\nu} \setminus K $  for some fixed compact set $K \subset \bbC^m$. Such a family determines an immersed manifold $\Gamma_{\nu} \subset \bbC^{m} \setminus\mathcal{P}_{\omega,r}$ of dimension $(m+1)$ such that the pullback of $\Omega_{\tau}$ to $\Gamma_{\nu}$ is integrable and the boundary decomposes as follows with the Stokes orientation $\partial \Gamma_{\nu}=\Upsilon_{\nu} \sqcup (-\Upsilon'_{\nu})$. As $\Omega_{\tau}$ is holomorphic on $\Gamma_{\nu}$ we get by Stokes Theorem that $0=\int_{\Gamma_{\nu}} d\Omega_{\tau}=\int_{\partial \Gamma_{\nu}} \Omega_{\tau} $, and therefore we have that
\begin{equation} \label{eq:mu2}
\int_{\Upsilon_{\nu}} \Omega_{\tau} =\int_{\Upsilon'_{\nu}}  \Omega_{\tau}.
\end{equation} We now choose a suitable allowable deformation. Towards that end, let $\varepsilon'>0$ be a small positive parameter  choosen such that
\begin{equation}
\left\{ x \in \bbC^m: \Re(x), \Im(x) \in  [-\varepsilon',\varepsilon']^m \right\} \cap \mathcal{P}_{\omega,r}=\emptyset.
\end{equation} 
Clearly such $\varepsilon'>0$ exists since $G_{\omega,r}$ is regular at the origin.  We can and will apply an allowable deformation of $\Upsilon_{\nu}$ to obtain a new contour $\Upsilon_{\nu}'$, such that $\Upsilon_{\nu}' \cap \R^m= [-\varepsilon',\varepsilon']^m$, and such that the integrand \eqref{eq:integrand} is exponentially decaying on $\Upsilon_{\nu}'  \setminus \R^m$. In fact,  we can and will choose the deformation $\Upsilon_{\nu}' $ such that for each $j \in \{1,...,m\}$, the projection $\pi_j(\Upsilon_{\nu}')$ of $\Upsilon_{\nu}'$ onto $\bbC_j$ is equal to the following union of line segments
\begin{equation}
\pi_j(\Upsilon_{\nu}' )=\left( i\nu_j\varepsilon+(-\infty,-\varepsilon'] \right) \cup \left( i[0,\nu_j \varepsilon]- \varepsilon' \right) \cup \left( [-\varepsilon',\varepsilon'] \right) \cup \left( i[0,\nu_j \varepsilon]+ \varepsilon' \right)  \cup \left( i\nu_j\varepsilon+[\varepsilon',\infty) \right),
\end{equation}
where $[0,\nu_j \varepsilon]$ denotes the straight line between $0$ and $\nu_j \varepsilon$, and where $\varepsilon>0$ is the small positive parameter entering in the definition of $\Upsilon_{\nu}$. We now argue why the integrand is exponentially decaying on $\Upsilon_{\nu}' \setminus \R^m$. Towards that end let $\{\varphi_0\}\cup\{\varphi_i\}_{i}$ be a smooth partition of unity on $\Upsilon_{\nu}'$, such that $\varphi_0(0)=1$ and $\overline{\text{supp}(\varphi_0)} \subset [-\varepsilon',\varepsilon']^m$. Define \begin{equation}
 C\coloneqq\inf\left\{ \Re(Q(x)): x \in \overline{\Upsilon_{\nu}' \setminus \text{supp}(\varphi_0)}  \right\}.
  \end{equation} Note that 
  \begin{equation} \label{eq:Re}
  \Re(Q(x))=Q(\Re(x))-Q(\Im(x)).
  \end{equation} Provided $\varepsilon$ is neglible compared to $\varepsilon'$, the imaginary part of $x \in \overline{\Upsilon_{\nu}' \setminus \text{supp}(\varphi_0)} $ is always neglible compared to its real part, and therefore we see from \eqref{eq:Re} that $\Re(Q(x))>0$ and consequently $C>0$. Since $\varepsilon'$ is only required to be small compared to $\min \{ \lfloor \tilde{\omega}_j \rfloor,j=1,...,m\}$ (which is strictly positive by assumption) and since $\varepsilon$ can be choosen to be arbitrarily small, we see that we can indeed arrange that $\varepsilon' >> \varepsilon$, and therefore that $C>0$. Therefore the absolute value of the exponential factor of the integrand \eqref{eq:integrand} is bounded from above  on $\overline{\Upsilon_{\nu}' \setminus \text{supp}(\varphi_0)} $ by $\abs{\exp\left(C/2\pi \iu \tau\right)}$. Thus we get from \eqref{eq:mu} and \eqref{eq:mu2} that
\begin{equation} \label{def:Itau} 
	\widehat{Z}^{\nu}_{\omega,r}(\tau)=c_{\tau} \int_{ \Upsilon_{\nu}'} \Omega_{\tau}=c_{\tau} \left[ \int_{ \R^m }  \Omega_{\tau}(x)\varphi_0(x)+\mathcal{O}\left(e^{C/2\pi \iu \tau}\right)\right].
\end{equation}
The term $\mathcal{O}\left(e^{C/2\pi \iu \tau}\right)$ is exponentially suppressed. Thus we obtain a Poincare asymptotic expansion of $	\widehat{Z}^{\nu}_{\omega,r}(\tau)$ by applying stationary phase approximation (\cref{lem:StationaryPhase}) to the integral \begin{equation} \label{def:Itau}
I(\tau)\coloneqq c_{\tau} \int_{ \R^m }  \exp\left(\frac{Q(x)}{2\pi i \tau} \right) G_{\omega,r}(ix)\varphi_0(x) d^m x.
\end{equation}
Doing this asymptotic expansion for each $\nu \in \mu_2^V$ gives the asymptotic expansion \eqref{eq:expansion}, where the coefficients of the expansion (w.r.t. the parameter $2\pi i \tau$) are defined by \eqref{def:Zhatpertubativecoef}. This finishes the proof.
\end{proof}

\begin{proof}[Proof of \cref{thm:main}]
This theorem is a direct consequence of the asymptotic expansion \eqref{eq:expansion}.
	\end{proof}

\begin{rem}
	Recall that the pole divisor $\mathcal{P}_{\omega,r} $ of $G(ix)$ is contained within $\R^V$. The integral representation \eqref{eq:integraprep}  can be interpreted as the identity
	\begin{align}
	 \label{df:CauchyPrincipalValue}
	\widehat{Z}_{r}(M,\omega;\tau)  &=  
	\mathrm{v.p.} \sqrt{\frac{\det(B)}{(2\pi^2 i\tau)^{\abs{V}}}}
	\int_{\R^V} \exp\left( \frac{Q(x)}{2\pi i \tau}\right) G_{\omega,r}(ix)dx
	\\& \coloneqq \lim_{\varepsilon \rightarrow 0} 2^{-\abs{V}} \sum_{\nu \in \mu_2^V} 
	\sqrt{\frac{\det(B)}{(2\pi^2 i\tau)^{\abs{V}}}}
	\int_{i\varepsilon\nu+\R^V} \exp\left( \frac{Q(x)}{2\pi i \tau}\right) G_{\omega,r}(ix)dx,
	\end{align} where the right hand side of equation \eqref{df:CauchyPrincipalValue} is the Cauchy principal value. 
	\end{rem} 


We remark that the arguments given in the proof of \cref{cor:Borel} are standard, but are included here for the sake of completeness and for the sake of the non-expert.
\begin{proof}[Proof of \cref{cor:Borel}]

	  Consider the integral $I(\tau)$ defined in equation \eqref{def:Itau}. The asymptotix expansion \eqref{eq:expansion} is obtained by applying stationary phase approximation (\cref{lem:StationaryPhase}) to $I(\tau)$, and therefore we are interested in computing the inverse Laplace transform of $I(\tau)$. Towards that end, we consider the quadratic form $Q \colon \R^m \rightarrow [0,\infty).$ We want to consider $Q$ as a new coordinate on $\R^m$ and towards that end, we now discuss the Gelfand--Leray transform \cite{Leray}.
	  
	   Let $\lambda_1,...,\lambda_m\in (0,\infty)$ be the eigenvalues of $-B$ (counted with multiplicity). As $Q$ is a non-degenerate quadratic form, there exists an orthogonal transformation $L \colon \R^m \rightarrow \R^m$ such that for all $x\in \R^m$ we have that
	  \begin{equation} \label{eq:pullback}
	  L^*(Q)(x)=\sum_{i=1}^m\lambda_i x_i^2.
	  \end{equation}
	  Define a new basis of $\R^m$ by $\tilde{e}_j=L(e_j),j=1,...,m$, and let $y_j,j=1,...,m$ denote the coordinates with respect to this basis. Then equation \eqref{eq:pullback} implies that $Q(y)=\sum_i \lambda_i y_i^2$ and $d^m y= \det(L) d^m x$. Consider the smooth $m-1$ form on $\R^m \setminus \{0\}$ defined by
	\begin{equation} \label{def:GelfandLeray}
	\Omega\coloneqq  (-1)^{m-1}\sqrt{\frac{\det(B)}{\pi^{m}}} \det(L)^{-1}\frac{\sum_{i=1}^m (-1)^{i+1}y_i  \wedge_{j=1,...,m, j\not=i } d y_j}{2Q}.
	\end{equation}
	We want to show that $\Omega\wedge dQ=\sqrt{\frac{\det(B)}{\pi^{m}}} d^m x$, so that $\Omega$ is the (normalized) Gelfand--Leray transform mentioned in the introduction. We have $dQ=\sum_{i=1}^m 2\lambda_i y_i dy_i$, and therefore
	\begin{align} \begin{split}  \label{eq:desired}
		\Omega\wedge dQ&=(-1)^{m-1} dQ \wedge \Omega
	= \sum_{i=1}^m 2\lambda_i y_i dy_i\wedge \sqrt{\frac{\det(B)}{\pi ^{m}}} \det(L)^{-1}\frac{\sum_{i=1}^m (-1)^{i+1}y_i  \wedge_{j=1,...,m, j\not=i } d y_j}{2Q}
		\\&=\sqrt{\frac{\det(B)}{\pi^{m}}} \det(L)^{-1} \frac{(\sum_{i=1}^m \lambda_i y_i^2) d^my}{Q}=\sqrt{\frac{\det(B)}{\pi^{m}}} \det(L)^{-1}  \frac{Q}{Q} d^m y= \sqrt{\frac{\det(B)}{\pi^{m}}} d^m x.
		\end{split} 
	\end{align}
	Thus $\Omega$ has the desired property. On the complement of the origin, we have that $Q$ defines a proper smooth submersion $\R^{m} \setminus \{0\} \rightarrow (0,\infty)$, which is the projection of a fibre bundle with fibres diffeomorphic to the $(m-1)$-dimensional sphere. Setting $Q(x)=z$, and using the identity \eqref{eq:desired}, we have that
	\begin{align} \begin{split}  \label{eq:standard}
	 I(\tau)=&\frac{1}{\sqrt{2\pi i \tau}^m} \int_{\R^V} \exp\left( \frac{Q(x)}{2\pi i\tau }\right)  G_{\omega,r}(ix) \varphi_0(x) \Omega(x) \wedge  d Q(x)
	 \\=&\frac{1}{\sqrt{2\pi i \tau}^m} \int_{\R^V} \exp\left( \frac{z}{2\pi i\tau }\right)  G_{\omega,r}(ix) \varphi_0(x) \Omega(x) \wedge  d z
	 \\ \label{eq:standard}=& \frac{1}{\sqrt{2\pi i \tau}^m} \int_{0} ^{\infty}\exp\left( \frac{z}{2\pi i\tau }\right) \int_{Q^{-1}(z) \cap \R^m} G_{\omega,r}(ix) \varphi_0(x) \Omega(x) \wedge  d z,
	 \end{split} 
	\end{align}
	where we used a standard identity involving the Gelfand--Leray transform \cite{Leray} (see also \cite[Chapter 7]{Arnold88}). In the following, we will use the notation $Q^{-1}(z)=Q^{-1}(z)\cap \R^m$ to avoid cluttering of notation.  For all $z>0$, we have an orientation preserving diffeomorphism $\varphi_z:Q^{-1}(1) \rightarrow Q^{-1}(z)$ given by $x \mapsto \sqrt{z} x$ for all $x \in Q^{-1}(1)$, and from the definition \eqref{def:GelfandLeray}, it follows that
	\begin{equation}
	\varphi_z^*(\Omega_{\mid Q^{-1}(z)})=\frac{\sqrt{z}^m}{z} \Omega_{\mid Q^{-1}(1)}.
	\end{equation}
For small $z$ we have that that $\varphi_0(x)=1$ for all $x \in Q^{-1}(z)$, and therefore
	\begin{align} \begin{split}  \label{rewrite} 
	&\int_{Q^{-1}(z)}G_{\omega,r}(ix)\varphi_0(x) \Omega(x)=	\int_{Q^{-1}(z)}G_{\omega,r}(ix) \Omega(x) \\=&\frac{\sqrt{z}^m}{z} \int_{Q^{-1}(1)} G_{\omega,r}(i\sqrt{z}x) \Omega(x)=\mathcal{B}_r(M,\omega,\tau)(z)=:B(z), \end{split} 
	\end{align}
	where $\mathcal{B}_r(M,\omega,\tau)$ is the function defined in equation \eqref{def:BorelM}. From \eqref{eq:standard} and \eqref{rewrite} we obtain
	\begin{equation}
	I(\tau)=\frac{1}{\sqrt{2\pi i\tau}^m} \int_0^{\infty} \exp\left(\frac{z}{2\pi i \tau}\right) B(z) dz.
	\end{equation} It is standard fact that  there exists a sequence $(b_l)_{l=0}^{\infty} \subset \bbC$ such that $B(z)$ admits a convergent expansion for small $z$ of the form
\begin{equation} \label{Bexp}
B(z)=\frac{\sqrt{z}^m}{z} \sum_{l=0} b_l  z^l,
\end{equation}
and this claim entails that the asymptotic expansion of $I(\tau)$, which follows from applying stationary phase approximation (\cref{lem:StationaryPhase}), can equivalently be obtained by integrating the expansion \eqref{Bexp} against the Laplace kernel (and then mutiplying the resulting series by $\sqrt{2\pi i\tau}^{-m}$), as stated in the second part of \cref{lem:resummation}. Therefore, the second part of \cref{lem:resummation} implies the identity \eqref{eq:integraprep2}.

 To see that an expansion of the form \eqref{Bexp} holds, one can argue as follows. The existence of an expansion of the form \eqref{Bexp} is easily seen to be equivalent to the claim that for all small $s$ in the complex plane, we have that 
 \begin{equation} 
 \label{eq:final} \int_{Q^{-1}(1)} G_{\omega,r}( isx)  \Omega(x) =\int_{Q^{-1}(1)} G_{\omega,r}( -isx)    \Omega(x),	
 \end{equation} because this will imply that only even powers of $s$ appear with non-zero coefficient in the Taylor expansion with respect to $s$ of the function implictly defined in \eqref{eq:final}. Towards showing that claim, consider the antipodal map $A \colon \R^m \rightarrow \R^m$ given by $A(x)=-x$. We observe that $A$ restricts to an isometry $A:Q^{-1}(1) \rightarrow  Q^{-1}(1)$, and this is orientation preserving if and only if $m-1$ is odd. Thus we see	
\begin{multline}
	 \int_{Q^{-1}(1)} G_{\omega,r}(isx)    \Omega(x) =(-1)^m  \int_{Q^{-1}(1)} A^*(G_{\omega,r}(isx)   \Omega(x) )=(-1)^m  \int_{Q^{-1}(1)} G_{\omega,r}(-isx)   A^*(\Omega(x) )\\ \label{secondtolast} = (-1)^{2m} \int_{Q^{-1}(1)} G_{\omega,r}( -isx)   \Omega(x) = \int_{Q^{-1}(1)} G_{\omega,r}( -isx)    \Omega(x),
 	\end{multline} where we used that $A^*(\Omega(x))=(-1)^m \Omega(x)$, which follows directly from \eqref{def:GelfandLeray}. Thus we have established the expansion \eqref{Bexp} (which is standard). This finishes the proof. \end{proof} 
 
 \subsection{A Complimentary Approach} \label{sec:AlternativeProof}

In this section, we use a complimentary approach to prove the following theorem, which of course is a consequence of \cref{thm:main}.
 \begin{thm} \label{thm:it} It holds that
 $ 	\lim_{t \rightarrow +0} \widehat{Z}_r(M,\omega;it)=Z_r(M,\omega).$
 \end{thm}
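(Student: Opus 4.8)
The statement follows from \cref{thm:main} by taking the sector $S=\{it:t>0\}$ (a degenerate sector), but the point of \cref{sec:AlternativeProof} is to give an independent argument that avoids the method of steepest descent and instead extracts the limit directly from the central identity \eqref{eq:CentralIdentity} of \cref{prop:GeneratingFunction}. Thus the starting point is the exact finite decomposition
\begin{equation} \label{eq:planstart}
\widehat{Z}_r(M,\omega;it)=2^{-\abs{V}}\sum_{\nu\in\mu_2^V}\widehat{Z}^{\nu}_{\omega,r}(it),\qquad \widehat{Z}^{\nu}_{\omega,r}(it)=\sum_{\ell\in(\deg v)_{v\in V}+2\Z_{\geq-1}^V}G^{\nu}_{\omega,r,\ell}\,\bm{e}\!\left(\frac{it\,Q^{-1}_{\nu}(\ell)}{4}\right),
\end{equation}
and the goal is to show that each summand $\widehat{Z}^{\nu}_{\omega,r}(it)$ converges as $t\to+0$, with $2^{-\abs{V}}\sum_{\nu}\lim_{t\to+0}\widehat{Z}^{\nu}_{\omega,r}(it)=Z_r(M,\omega)$. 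First I would rewrite $\widehat{Z}^{\nu}_{\omega,r}(it)$ as a genuine $\abs{V}$-fold series in the $q$-variable: using \eqref{def:G_l} and \eqref{eq:Fexpansion}, each $\widehat{Z}^{\nu}_{\omega,r}(it)$ is, up to the constant $\lambda_\nu$, a sum over the finite group $\frac12(\tilde\omega+re)+\Z^V/r\Z^V$ of expressions of the shape $\sum_{\ell}F_\ell\,q^{Q^{-1}_\nu(\ell)/4}\bm{e}(\cdots)$ with $q=\bm{e}(it)=e^{-2\pi t}\to 1^-$, i.e.\ a radial limit of (a combination of) false/partial theta-type series paired against the Laurent coefficients of $F_\Gamma$.

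\textbf{Key steps.} The main tool is the Euler--Maclaurin type asymptotic expansion \cref{prop:asymp_lim} (referenced in the introduction as the engine of this section): applied to each $\nu$-summand of \eqref{eq:planstart}, it produces an asymptotic expansion in $t$ as $t\to+0$ whose constant term is the value of the corresponding regularized sum. Concretely, the steps are: (i) fix $\nu$ and a coset representative $\alpha$; write the inner $\ell$-sum as a sum over a sublattice of $\Z^V$ of a function of the form (polynomially bounded coefficient)$\times e^{-2\pi t\, \mathrm{quadratic form}}$, which is exactly the input format of \cref{prop:asymp_lim}; (ii) invoke \cref{prop:asymp_lim} to obtain $\widehat{Z}^{\nu}_{\omega,r}(it)\sim \sum_{k\geq k_0}c^{\nu}_k t^{k/?}$ with an explicit constant term $c^{\nu}_{0}$; (iii) sum over $\nu$ and over the finite coset and identify $2^{-\abs{V}}\sum_{\nu}c^{\nu}_0$ with $G_{\omega,r}(0)$. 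For step (iii) the natural bridge is again the generating-function identity: by \eqref{eq:Gexpansion} the numbers $G^{\nu}_{\omega,r,\ell}$ are precisely the exponential-series coefficients of $G_{\omega,r}$, and the constant term of an Euler--Maclaurin expansion of $\sum_\ell G^{\nu}_{\omega,r,\ell}q^{Q^{-1}_\nu(\ell)/4}$ is, after averaging over $\nu$, the ``diagonal''/regularized evaluation that \cref{prop:G(1)=Z} identifies with $G_{\omega,r}(0)=Z_r(M,\omega)$. An alternative and perhaps cleaner route for step (iii), once the limits are known to exist, is to avoid recomputing constant terms entirely: the limit $\lim_{t\to+0}\widehat Z_r(M,\omega;it)$ exists by (i)--(ii), and its value can be pinned down by comparing with the integral representation \eqref{eq:integraprep}, or simply by noting that the existence of the radial limit plus the known asymptotic expansion \eqref{eq:expansion} forces the value to be the constant term $Z_r(M,\omega)$.

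\textbf{Main obstacle.} The delicate point is step (ii): verifying that the hypotheses of \cref{prop:asymp_lim} genuinely apply to the series $\widehat{Z}^{\nu}_{\omega,r}(it)$. The quadratic forms $Q^{-1}_{\nu}(\ell)=-\transpose{(I_\nu\ell)}B^{-1}(I_\nu\ell)$ are \emph{negative} definite (since $-B$ is positive definite, $B^{-1}$ is negative definite, so $-\transpose{x}B^{-1}x>0$), which is what makes $q^{Q^{-1}_\nu(\ell)/4}=e^{-2\pi t\cdot(\text{positive def. form})/4}$ decay and the sum converge for $t>0$; but one must check the lattice over which $\ell$ ranges, the support condition $\ell\in(\deg v)_v+2\Z_{\geq-1}^V$ (the ``$\geq-1$'' truncation of each coordinate coming from the geometric-series expansion of $F_v(x)=(x-1/x)^{2-\deg v}$ at high-degree vertices), and the polynomial growth of $F_\ell$ are all compatible with the precise statement of \cref{prop:asymp_lim}. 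The truncated-cone shape of the index set, rather than a full lattice, is the feature that requires care and is where the argument differs from a textbook Euler--Maclaurin computation; handling the boundary terms it generates, and checking they assemble correctly after the $\nu$-average (using the symmetry \eqref{eq:Fsymmetry}), is the real work. Everything else—the interchange of the finite $\nu$-sum and $\alpha$-sum with the limit, and the final identification with $Z_r(M,\omega)$ via \cref{prop:G(1)=Z}—is routine.
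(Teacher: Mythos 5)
Your proposal follows essentially the same route as the paper's proof: start from the central identity \eqref{eq:CentralIdentity} of \cref{prop:GeneratingFunction}, apply the Euler--Maclaurin expansion of \cref{prop:asymp_lim} to each $\nu$-summand (noting that $\ell\mapsto G^{\nu}_{\omega,r,\ell}$ lies in the admissible function class and that the shifted-orthant index set is exactly what \cref{prop:asymp_lim} is designed for), and identify the constant term of the resulting expansion with $G_{\omega,r}(0)=Z_r(M,\omega)$ via \cref{prop:G(1)=Z}. The concerns you raise about the truncated index set and polynomial growth are legitimate but are absorbed directly into the hypotheses of \cref{prop:asymp_lim}, so no extra work is needed there.
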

 Our proof of \cref{thm:it} is based on \cref{prop:GeneratingFunction} and an Euler--Maclaurin type asymptotic expansion given in \cref{prop:asymp_lim} below.
 
 \begin{dfn} 
 	\label{dfn:Hadamard} Let $ N $ be a positive integer. For a formal Laurent series	\[	\varphi(t_1, \dots, t_N) = \sum_{m \in \Z^N} B_m t_1^{m_1} \dots t_N^{m_N} \in \bbC((t_1, \dots, t_N))	\] 	and a $ C^\infty $ function $ f \colon \R^{N} \to \bbC $ such that $ f^{(m)} (0) $ converges for any $ m \in \Z^N $, define their Hadamard product $ \varphi \odot f (t) \in \bbC((t)) $ as  follows	\[	\varphi \odot f (t)	\coloneqq	\sum_{m \in \Z^N} B_m f^{(m)}(0) t^{m_1 + s + m_N}.	\]
 \end{dfn}
 
 \begin{prop}[{\cite[Proposition 3.6]{M_GPPV}}] 
 	\label{prop:asymp_lim} 	Let $ N $ be a positive integer, $ \lambda \in \Z^N $ and $ F \colon \Z_{\ge 0}^N + \lambda \to \bbC $ be an element of 	\[ 	\delta(y \in \Z_{\ge 0}^N + \lambda)  	\bbC[ y_i, \delta(y_i \in a + k \Z) \mid 1 \le i \le N, a, k \in \Z ],	\]	where $ \delta $ be the Kronecker delta function and we stipulate that $ a + k\Z = \{ a \} $ when $ k=0 $.	Let 
 		\[	\varphi_{F, u} (t_1, \dots, t_N)	\coloneqq	\sum_{l \in \Z_{\ge 0}^N + \lambda} F(l) 	e^{t_1 (l_1 + u_1) + s + t_N (l_N + u_N)}	\in \bbC ((t_1, \dots, t_N))[u_1, \dots, u_N].
 			\]	Then, for any $ \alpha \in \R^N $ and any $ C^\infty $ function $ f \colon \R^{N} \to \bbC $ of rapid decay as $ x_1, \dots, x_N \to \infty $,	the following asymptotic formula holds as $t$ tends to $+0$	\[	\sum_{l \in \Z_{\ge 0}^N + \lambda} F(l) f(t(l+\alpha))	\sim	\varphi_{F, \alpha} \odot f (t).
 			\]
 		\end{prop}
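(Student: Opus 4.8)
The plan is to prove the asymptotic formula by a Mellin--Barnes argument, which handles all $N$ coordinates at once and exhibits the Hadamard product as a sum of residues. First I would reduce to a product weight: both sides of the claimed formula are $\bbC$-linear in $F$, so by the description of the ring to which $F$ belongs it suffices to treat
\[
F(l) = \delta(l \in \Z_{\ge 0}^N + \lambda)\prod_{i=1}^N l_i^{p_i}\,\delta(l_i \in a_i + k_i\Z), \qquad p_i \in \Z_{\ge 0}.
\]
For such $F$ the generating function factors as $\varphi_{F,u}(t_1,\dots,t_N) = \prod_i \varphi_i(t_i)$ with $\varphi_i(t_i) = \sum_{l_i} F_i(l_i)\,e^{t_i(l_i+u_i)}$ a one-variable Laurent series with finitely many negative powers (a pole of order $p_i+1$ at $t_i=0$ from the conical summation, or no pole at all when $k_i=0$).

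Next, using that $f$ is $C^\infty$ and of rapid decay, I would represent it through its $N$-dimensional Mellin transform,
\[
f(x) = \frac{1}{(2\pi i)^N}\int_{(c_1)}\!\!\cdots\!\int_{(c_N)} \widetilde f(s)\prod_i x_i^{-s_i}\,ds, \qquad \widetilde f(s) = \int_{(0,\infty)^N} f(x)\prod_i x_i^{s_i-1}\,dx,
\]
convergent for $\Re s_i$ large. The analytic input is that $\widetilde f$ continues meromorphically with poles only along $s_i \in \Z_{\le 0}$ and with iterated residue $\operatorname{Res}_{s_i=-m_i,\,\forall i}\widetilde f(s) = \partial_1^{m_1}\cdots\partial_N^{m_N} f(0)/(m_1!\cdots m_N!)$, which one obtains by the standard device of subtracting a Taylor polynomial times a cutoff and integrating the remainder by parts. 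Substituting $x_i = t(l_i+\alpha_i)$ and summing over $l$ in the region $\Re s_i \gg 0$ factors the resulting Dirichlet series into Hurwitz-type zetas $Z_i(s_i) = \sum_{l_i} F_i(l_i)(l_i+\alpha_i)^{-s_i}$; expanding $l_i^{p_i} = ((l_i+\alpha_i)-\alpha_i)^{p_i}$ writes each $Z_i$ as a finite combination of shifted Hurwitz zetas, so each has meromorphic continuation with finitely many poles and special values at $s_i \in \Z_{\le 0}$ given by Bernoulli polynomials. This yields
\[
\sum_{l} F(l)\, f(t(l+\alpha)) = \frac{1}{(2\pi i)^N}\int_{(c)} \widetilde f(s)\, t^{-\sum_i s_i}\prod_i Z_i(s_i)\,ds.
\]

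Then I would shift each contour to $\Re s_i = -M_i$ with $M_i \to \infty$. The remainder over the shifted contour is $O\big(t^{\sum_i M_i}\big)$ because $\widetilde f$ decays rapidly on vertical lines (a consequence of the smoothness of $f$), so the expansion is genuinely asymptotic as $t\to +0$; its terms come from residues at the crossed poles, located at $s_i$ equal either to a pole of $Z_i$ (finitely many, contributing the negative powers of $t$) or to $-m_i\in\Z_{\le 0}$ (contributing $t^{m_i}$ weighted by $\partial^m f(0)/m!$). The final step is to identify the collected residues with $\varphi_{F,\alpha}\odot f(t)$: since both factor over $i$, this reduces to the classical one-variable duality that the Laurent coefficients of $\sum_{l_i} F_i(l_i)\,e^{t_i(l_i+\alpha_i)}$ coincide with the matching residues and special values of $Z_i(s_i)$ --- concretely the identity $\zeta(-m,\beta) = -B_{m+1}(\beta)/(m+1)$ in the purely conical case, together with its finite-difference variants for general $p_i, a_i, k_i$.

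The main obstacle I expect is the two-sided analytic bookkeeping in several variables: establishing the precise pole/residue structure of $\widetilde f$ (that the iterated residue at $s=-m$ is exactly $\partial^m f(0)/m!$) and controlling the shifted-contour remainder uniformly, so that the termwise residue computation genuinely upgrades to a Poincar\'e asymptotic expansion. An alternative that avoids multidimensional Mellin analysis is induction on $N$, applying the one-variable Euler--Maclaurin expansion one coordinate at a time to the partially summed function; this is conceptually simpler but requires tracking the $t$-dependence introduced at each stage and keeping the error estimates uniform, which is considerably more bookkeeping-heavy.
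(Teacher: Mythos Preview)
The paper does not prove this proposition itself: it is quoted from \cite{M_GPPV}, and the only information given about the argument is the single sentence that it ``was proved by using the Euler--Maclaurin summation formula.''

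Your Mellin--Barnes strategy is therefore a genuinely different route. The reduction to product weights by linearity is correct, and the identification of the residues (poles of the Hurwitz-type $Z_i$ contributing the negative powers of $t$, poles of $\widetilde f$ at $s_i\in\Z_{\le 0}$ contributing the nonnegative powers) with the Laurent coefficients of $\varphi_{F,\alpha}$ via $\zeta(-m,\beta)=-B_{m+1}(\beta)/(m+1)$ is the right mechanism. The obstacle you flag---the pole structure of $\widetilde f$ and uniform control of the shifted-contour remainder---is standard for smooth $f$ of rapid decay and is not the real difficulty.

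What your outline does not address is a sign issue: Mellin inversion represents $f$ only on $(0,\infty)^N$, but the hypotheses allow arbitrary $\alpha\in\R^N$ and $\lambda\in\Z^N$, so for finitely many values of $l_i$ in each coordinate one may have $l_i+\alpha_i\le 0$; for those lattice points the substitution $x_i=t(l_i+\alpha_i)$ into the Mellin integral is illegitimate, and the associated series $Z_i(s_i)=\sum_{l_i}F_i(l_i)(l_i+\alpha_i)^{-s_i}$ is not a standard Hurwitz zeta. This is repairable (strip off the finitely many bad $l_i$ per coordinate and treat the resulting lower-dimensional sums by induction on $N$, then match the pieces against the corresponding splitting of $\varphi_{F,\alpha}$), but it is additional work beyond what you have sketched. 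The iterated one-variable Euler--Maclaurin approach you mention at the end is insensitive to this sign issue and is, as far as the citation indicates, what the proof in \cite{M_GPPV} actually does.
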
 
 	\cref{prop:asymp_lim} generalizes \cite[Equation (44)]{Zagier_asymptotic}, \cite[Equation (2.8)]{BKM} and \cite[Lemma 2.2]{BMM_high_depth}.\cref{prop:asymp_lim} was proved by using the Euler--Maclaurin summation formula.

\begin{proof}[Proof of $ \cref{thm:it} $]
	Since the map $ \Z_{\ge 0}^V + \delta \to \bbC; \ell \mapsto G^{\nu}_{\omega,r,\ell} $ can be regarded as an element of
	\[
	\delta(y \in \Z_{\ge 0}^V + \delta) \cdot \bbC[ y_v, \delta(y_v \in a + k \Z) \mid v \in V, a, k \in \Z ],
	\]
	we have by \cref{prop:asymp_lim,prop:GeneratingFunction} the following asymptotic expansion as $t$ tends to $+0$
	\begin{align}
	\widehat{Z}_r ( M,\omega; \iu t)
	&\sim
	2^{-\abs{V}} \sum_{\nu \in \{ \pm 1 \}^V }
	G_{\omega, r} ( I_\nu x) \odot \bm{e}\left( \frac{Q( I_\nu x)}{4}\right) (\sqrt{t})
	\\
	&\sim
	G_{\omega, r} \odot \left( \bm{e}\left( \frac{Q(x)}{4}\right) \right) (\sqrt{t}).
	\end{align}
	Since $ G_{\omega, r} (x) $ is holomorphic at $ x = 0 $, the limit $ \lim_{t \to +0} \widehat{Z}_r ( M,\omega; \iu t) $ converges and equals to
	$ G_{\omega,r}(0) $, which equals to $ Z_r(M,\omega) $ by \cref{prop:G(1)=Z}.
\end{proof}

\begin{rem} \label{rem:comparison} We here give a brief comparison between \cref{thm:it} and \cref{thm:integral}. \cref{thm:it}  admits a shorter proof based on \cref{prop:asymp_lim} and is valid for $\tau \in i\R_+$. \cref{thm:integral} on the other hand is applicable for $\tau$ in any sector $S\subset \mathbb{H}$ and gives more information on the full asymptotic expansion and the Borel transform, but has a longer proof. 
\end{rem}

\bibliographystyle{alpha}
\bibliography{CGP_radial_limit}

\end{document}